\author{Tsukasa Isoshima}
\address{Department of Mathematics, Tokyo Institute of Technology, 2-12-1 Ookayama, Meguro-ku, Tokyo, 152-8551, Japan}
\email{isoshima.t.aa@m.titech.ac.jp}
\author{Masaki Ogawa}
\address{Mathematical science center for co-creative society, Tohoku University, Aoba-6-3 Aramaki, Aoba Ward, Sendai, Miyagi 980-0845}
\email{masaki.ogawa.b7@tohoku.ac.jp}
\subjclass[2020]{57K40, 57R65.}
\theoremstyle{plain}
\newtheorem{theorem}{Theorem}[section]
\newtheorem{lemma}[theorem]{Lemma}
\newtheorem{question}[theorem]{Question}
\theoremstyle{definition}
\newtheorem{definition}[theorem]{Definition}
\newtheorem{example}[theorem]{Example}
\theoremstyle{definition}
\begin{document}

\title{Trisections induced by the gluck surgery along certain spun knots}

\date{}
\begin{abstract}
Gay and Meier asked whether or not a trisection diagram obtained by the Gluck twist on a spun or a twist spun 2-knot obtained from some method is standard. In this paper, we depict the trisection diagrams explicitly when the 2-knot is the spun $(2n + 1, 2)$-torus knot, where $n\geq1$, and show that the trisection diagram is standard when $n = 1$. Moreover, we introduce a notion of homologically standard for trisection diagrams and show that the trisection diagram is homologically standard for all $n$.
\end{abstract}

\maketitle

\section{introduction}

A trisection is a decomposition of a 4-manifold introduced by Gay and Kirby \cite{GK}. This decomposition is used to study surface knots and 4-manifolds. One of the fields of studying trisections is the classification of trisections.

Similarly, a Heegaard splitting is a decomposition of a 3-manifold, and the classification of Heegaard splittings is a longstanding problem. A well-known result in this area is due to Waldhausen \cite{W}, which states the uniqueness of Heegaard splittings of the 3-sphere for each genus. Bonahon and Otal showed a similar theorem for lens spaces \cite{BO}.

Following the results of a Heegaard splitting, there are problems in the classification of trisections. By spinning non-isotopic Heegaard splittings of a Seifert fibered space, one can obtain mutually non-isotopic trisections of the spin of Seifert fibered spaces \cite{I}. Islambouli showed that each of these trisections is isotopic after one stabilization, based on the stable equivalence of trisections.

In \cite{MSA}, Meier, Schirmer, and Zupan conjectured that each trisection of the standard 4-sphere is unique, which is a generalization of Waldhausen's theorem about Heegaard splittings. One of the cases of this conjecture is a trisection of the 4-sphere obtained by the Gluck surgery.

The Gluck surgery is an operation performed along a 2-knot in a 4-manifold. For a 2-knot in $S^4$, a 4-manifold obtained by the Gluck surgery is a homotopy 4-sphere. It is known that the 4-manifold obtained by the Gluck surgery along a spun 2-knot is also the 4-sphere. Gay and Meier described a way to construct a trisection diagram of the 4-manifold and asked if the trisection diagram is standard \cite{GM1}. This question is a special case of the conjecture mentioned above, and the trisection diagram is considered as a potential counterexample to the conjecture, even in the case of the spun trefoil knot. However, in this paper, we show the following result:

\begin{theorem}\label{thm1}
	The trisection diagrams $\mathcal{D}_{x} \cup \mathcal{D}_{S(t(3,2))}$ and $\overline{\mathcal{D}_{x}} \cup \mathcal{D}_{S(t(3,2))}$ ($x=a,b,c$) are standard.
\end{theorem}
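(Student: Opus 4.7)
The plan is to start from the explicit cut systems making up $\mathcal{D}_x$, $\overline{\mathcal{D}_x}$, and $\mathcal{D}_{S(t(3,2))}$, and to reduce each of the six combined diagrams by handle slides (and, if necessary, destabilizations) to the standard trisection diagram of $S^4$. Since the Gluck surgery along any spun $2$-knot is known to yield $S^4$, each of the six diagrams already represents a trisection of $S^4$; the task is purely one of simplifying the cut systems on the underlying trisection surface.

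First I would fix a preferred basis for $H_1$ of the trisection surface and write the three cut systems $\alpha$, $\beta$, $\gamma$ in each diagram as explicit curves, reading them off from the pictures that define $\mathcal{D}_x$, $\overline{\mathcal{D}_x}$, and $\mathcal{D}_{S(t(3,2))}$. Each pairwise union $\alpha\cup\beta$, $\beta\cup\gamma$, $\alpha\cup\gamma$ is a Heegaard diagram of a connected sum $\#^k S^1\times S^2$, so by applying slides among the $\alpha$'s, then among the $\beta$'s, then among the $\gamma$'s, one attempts to realise each pairwise diagram as a standard Heegaard diagram while preserving the other two.

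Next, I would try to arrange the slides so that after simplifying one pair, say $(\alpha,\beta)$, the curves $\gamma$ are carried along to a form in which the remaining two pairs are simultaneously standard. The fact that $\mathcal{D}_{S(t(3,2))}$ is common to all six diagrams and that the pieces $\mathcal{D}_a$, $\mathcal{D}_b$, $\mathcal{D}_c$ are related by explicit symmetries should allow one model computation to serve as a template, with $\overline{\mathcal{D}_x}$ reduced to $\mathcal{D}_x$ by an orientation-reversing reflection on the surface. If the process produces a diagram on a surface of larger-than-minimal genus, I would destabilize until the standard form is reached.

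The main obstacle will be the bookkeeping: performing a coherent sequence of handle slides that simplifies $\alpha$, $\beta$, and $\gamma$ simultaneously, without inadvertently breaking the trisection condition that each pairwise union remains a Heegaard diagram of the correct $\#^k S^1\times S^2$. In particular, the interaction between the trefoil-specific curves of $\mathcal{D}_{S(t(3,2))}$ and the Gluck-surgery curves of $\mathcal{D}_x$ is where I expect the genuine work to lie; I would tackle this by first identifying a small cancelling pair in the combined diagram that yields an obvious destabilization, and then proceeding by induction on the remaining genus.
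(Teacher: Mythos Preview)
Your outline is in the right spirit---handle slides and destabilizations are exactly how the paper proceeds---but what you have written is a strategy, not a proof, and two of your proposed shortcuts are unjustified.

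First, the claim that $\overline{\mathcal{D}_x}$ is obtained from $\mathcal{D}_x$ by an orientation-reversing reflection is not supported. These six diagrams arise as different destabilizations of the Gay--Meier Gluck piece $\mathcal{D}$; the bar does not denote a mirror image. In the paper's argument the case $\overline{\mathcal{D}_b}\cup\mathcal{D}_{S(t(3,2))}$ is treated separately from $\mathcal{D}_b\cup\mathcal{D}_{S(t(3,2))}$, with a genuinely different sequence of slides (and with $t_\delta^{-1}$ in place of $t_\delta$ at the key step). You cannot collapse the barred and unbarred cases by symmetry.

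Second, the reduction over $x\in\{a,b,c\}$ does work, but not for the vague reason you give. The paper observes that $\mathcal{D}_{S(t(3,2))}$ is $0$-annular, i.e.\ the $\gamma$-arcs can be made parallel to the $\alpha$-arcs by slides, so the arced relative diagram has a $\mathbb{Z}_3$-symmetry cycling $(\alpha,\beta,\gamma)$. This is what lets one fix $x=b$; you would need to supply that argument.

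Third, ``induction on the remaining genus'' is not a mechanism here: there is no evident inductive structure, and one does not find a cancelling pair without substantial preliminary work. The paper's proof consists of a long, carefully chosen list of slides (e.g.\ slide $\alpha_4$ over $\alpha_3$, then $\alpha_1,\alpha_4$ over $\alpha_2,\alpha_3$, \ldots), a global Dehn twist $t_\delta$ exploiting a braid relation $t_\delta(\beta_4)=m$, and only then a single destabilization along $(\alpha_4,\beta_4,\gamma_4)$, followed by further slides down to the stabilized genus-$0$ diagram. None of these concrete moves appear in your plan, and they are the entire content of the theorem.
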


This theorem answers affirmatively the question given by Gay and Meier in the simplest case. Note that $\mathcal{D}_{x}$ and $\overline{\mathcal{D}_{x}}$ are  described in Figure \ref{fig:D_x,overline{D_x}}.

To prove this theorem, we perform many handle slides on the obtained trisection diagram. Determining whether a diagram is standard or not by handle sliding alone is challenging, so we consider homological information about trisection diagrams. We obtain the following result from this homological information:

\begin{theorem}\label{thm2}
	The trisection diagrams $\mathcal{D}_{x} \cup \mathcal{D}_{S(t(2n+1,2))}$ and $\overline{\mathcal{D}_{x}} \cup \mathcal{D}_{S(t(2n+1,2))}$ ($x=a,b,c$) are homologically standard.
\end{theorem}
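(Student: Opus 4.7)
The plan is to translate the geometric question into linear algebra over $H_1(\Sigma_g;\mathbb{Z})$. A homological standardness check asks only that the three cut systems $\alpha$, $\beta$, $\gamma$, viewed as integer matrices recording the $H_1$-classes of their curves, can be simultaneously reduced to the matrices of the standard genus-$g$ trisection of $S^4$ by elementary row operations (handle slides within each cut system, which are homologically invisible) together with an invertible change of symplectic basis on $H_1(\Sigma_g)$.

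First I would fix a symplectic basis of $H_1(\Sigma_g;\mathbb{Z})$ adapted to the block decomposition of the trisection surface into the $\mathcal{D}_{S(t(2n+1,2))}$ part and the $\mathcal{D}_x$ (or $\overline{\mathcal{D}_x}$) part. Reading the curves directly off the explicit diagrams constructed earlier in the paper, I would write down the three matrices $A, B, C$ representing the cut systems. The $\mathcal{D}_x$ block is independent of $n$ and can be computed once and for all; the $\mathcal{D}_{S(t(2n+1,2))}$ block depends on $n$ in a very regular way, with entries encoding the $(2n+1,2)$-framing in a periodic pattern dictated by the spun construction.

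Next I would perform row reductions on each of $A$, $B$, $C$. The key observation is that the $(2n+1,2)$-block behaves like a Euclidean-algorithm step on the pair $(2n+1,2)$: since $\gcd(2n+1,2)=1$, repeated subtractions reduce one off-diagonal entry to $\pm 1$, which then absorbs the $(2n+1)$-entries and leaves a block of standard shape. After these reductions, I expect the matrices $A, B, C$ to split as a direct sum of a small standard block (the genuine contribution of the Gluck cap $\mathcal{D}_x$) and a collection of trivial handles coming from the spun-knot part. Comparing with the standard trisection matrices of $S^4$ then finishes the argument, with the barred versions $\overline{\mathcal{D}_x}$ handled either directly in parallel or by invoking an obvious sign-reversal symmetry that preserves standardness at the $H_1$-level.

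The main obstacle is bookkeeping the coupling between the two blocks. A cut-system curve supported in $\mathcal{D}_x$ may wind through a handle whose dual homology generator lives in the $\mathcal{D}_{S(t(2n+1,2))}$ block, producing cross-terms that persist through the initial reductions. The hard part will be organizing the basis so that these couplings are upper-triangular with respect to the reduction order, so that clearing the $(2n+1,2)$-block on one side does not reintroduce obstructions on the other. I expect this triangularity to follow from the explicit product structure of the spun construction, but verifying it for all six configurations $x=a,b,c$ and for the barred variants uniformly in $n$ is where the work lies.
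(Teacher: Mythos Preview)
Your plan is essentially the paper's own approach: record the $H_1$-classes of the $\alpha$-, $\beta$-, $\gamma$-curves as the rows of an integer matrix and then apply homological handle slides (row operations within one colour) together with homological Dehn twists (symplectic transvections acting on all rows at once) until the configuration of the standard trisection of $S^4$ is reached, the $n$-dependent entries being cleared by performing $O(n)$ transvections along fixed basis vectors. The paper simply carries this out explicitly on a single $12\times 8$ matrix for $x=b$, relying on the $\mathbb{Z}_3$-symmetry of $\mathcal{D}_{S(t(2n+1,2))}$ (as in the proof of Theorem~\ref{thm1}) to cover $x=a,c$, and remarks that the case $\overline{\mathcal{D}_b}$ is handled similarly.
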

This means that the trisection diagrams obtained by the Gluck surgery in Theorem \ref{thm2} may be standard.

This paper is organized as follows:
In Section 2, we review some definitions of trisections and relative trisections. After that, we construct trisection diagrams obtained by the Gluck twist in Section 3. Then we show Theorem \ref{thm1} in Section 4. In Section 5, we introduce a notion of homological standard and show Theorem \ref{thm2}.

\section{preliminaries}
In this paper, we suppose that all 4-manifolds are smooth, compact, connected, and orientable, and a surface link in a 4-manifold is smoothly embedded.

\subsection{Trisections and relative trisections}
A trisection of a 4-manifold is a decomposition of the 4-manifold into three 1-handlebodies. This is an analogy of a Heegaard splitting of a 3-manifold.
\begin{definition}
	Let $X$ be a closed 4-manifold.
	A decomposition $X=X_1\cup X_2\cup X_3$ is called a {\it trisection} if the following holds:
	\begin{itemize}
		\item $X_i\cong \natural ^{k_i} S^1\times B^3$ for $i=1, 2, 3$,
		\item $X_i\cap X_j\cong H_g$ for $i\neq j$, and
		\item $X_1\cap X_2\cap X_3\cong \Sigma_g$
	\end{itemize}
	, where $H_g$ is a 3-dimensional genus $g$ handlebody and $\Sigma_g$ is a genus $g$ orientable, closed surface.
	In this case, we call this trisection a $(g; k_1, k_2, k_3)$-trisection. 
\end{definition}

Any closed 4-manifold admits a trisection \cite{GK}. For a 4-manifold with non-empty boundaries, a relative trisection was defined in \cite{CGP}.
Before reviewing the definition of a relative trisection, we give some notations.
We decompose the boundary of the disk $D=\{re^{i\theta} | r\in [0, 1] , -\pi/3\leq \theta \leq \pi / 3 \}$ in $\mathbb{C}$ as follows:
\[
	\partial ^{-} D = \{re \in \partial D | \theta = -\pi/3\}, 
\] 
\[
	\partial^{0} D = \{e^{i\theta}\in \partial D\}, \text{and}
\]
\[
	\partial^{+} D = \{re \in \partial D | \theta = \pi/3\}.
\]
For the genus $p$ surface $P$ with $b$ boundary, $U=P\times D$ is diffeomorphic to $\natural ^{2p+b-1} S^1\times B^3$.
Also, its boundary is decomposed into $\partial ^0 U = (P\times \partial ^{-} D) \cup (\partial P \times D)$ and $\partial ^{\pm} U = P\times \partial^{\pm} D$.
Let $V_n$ be $\natural ^n (S^1\times B^3)$ and  $\partial V_n = H_s^{-}\cup H_{s}^{+}$ a genus $n+s$ Heegaard splitting.

We set $s=g-k+p+b-1$ and $n=k-2p-b+1$. Then $Z_k\cong U\natural V_n\cong \natural ^k S^1\times B^3$.
We note that 
\[
	\partial Z_k = Y^{+}_{g, k ; p, b}\cup Y^{0}_{g, k ; p, b}\cup Y^{-}_{g, k ; p, b}
\]
, where $Y^{\pm}_{g, k ; p, b}=\partial ^{\pm} U\natural H_{s}^{\pm}$ and $Y^{0}_{g, k ; p, b}= \partial ^0 U$.
After the above preparation, we define a relative trisection as follows.
\begin{definition}
	Let $W$ be a 4-manifold with connected boundary. We call a decomposition $W=W_1\cup W_2\cup W_3$ a $(g, k; p, b)$-{\it relative trisection} of W if: 
	\begin{itemize}
		\item $W_i\cong Z_k$ for $i=1, 2, 3$, 
		\item $W_i\cap W_j\cong Y^{+}_{g, k ; p, b}$ and $W_i\cap W_{i-1}\cong Y^{-}_{g, k ; p, b}$ for $i=1, 2, 3$.
	\end{itemize}
	As a consequence of the above condition, $W_1\cap W_2\cap W_3$ is a genus $g$ surface with $b$ boundary components. 
\end{definition}

\subsection{Doubly-pointed Heegaard diagrams and trisection diagrams}

Let $K$ be a knot in a 3-manifold $M$, $\Sigma$ a Heegaard surface, and $(\Sigma; \alpha, \beta)$ a genus $g$ Heegaard diagram of $M$.
Suppose that $K$ is in a bridge position with respect to $\Sigma$.
We denote $\bold{x}=K\cap \Sigma$.
Then we call $(\Sigma; \alpha, \beta, \bold{x})$ a {\it pointed Heegaard diagram}.
Particularly, it is called a {\it doubly pointed Heegaard diagram} if $\bold{x}$ is two points.

For a surface knot in a 4-manifold, Meier and Zupan showed that every closed surface embedded in a 4-manifold can be put into a bridge trisected position \cite{MZ1}.
They also showed a 2-knot in a 4-manifold can be put into a 1-bridge position (Theorem 2 in \cite{MZ1}).
More precisely, see \cite{MZ1}.


We then recall a doubly pointed trisection diagram introduced in \cite{GM1}.
A trisection diagram is a 4-tuple $(\Sigma; \alpha, \beta, \gamma)$ such that each of $(\Sigma; \alpha, \beta)$, $(\Sigma;  \beta, \gamma)$ and $(\Sigma; \alpha, \gamma)$ is a Heegaard diagram of $\#^{k_i} S^1\times S^2$ for $i=1, 2, 3$ respectively.
\begin{definition}
We call $(\Sigma; \alpha, \beta, \gamma, \bold{x})$ a {\it doubly pointed trisection diagram} if the following holds:
\begin{itemize}
	\item $(\Sigma; \alpha, \beta, \gamma)$ is a trisection diagram.
	\item $\bold{x}$ is a set of disjoint two points on $\Sigma$ disjoint from $\alpha$, $\beta$ and $\gamma$.
\end{itemize}
\end{definition}

It is known that a doubly pointed trisection diagram determines a 2-knot uniquely (Proposition 4.5 in \cite{GM1}).

Recall that a $(g, k; p, b)$ {\it relative trisection diagram} is a 4-tuple $(\Sigma; \alpha, \beta, \gamma)$ consists of a genus $g$ surface with $b$ boundary components and three sets of $g-p$ simple closed curves such that each triple $(\Sigma; \alpha, \beta)$, $(\Sigma; \alpha, \gamma)$ and $(\Sigma; \beta, \gamma)$ are slide equivalent to the following figure (Figure \ref{reltridiag}). We then obtain a diagram of a relative trisection that has information on the open book decomposition of its boundary.
That is called an {\it arced relative trisection diagram}. More precisely, see Definition 2.12 in \cite{GM1}.

	  \begin{figure}[h]
		\centering
			\includegraphics[scale=0.6]{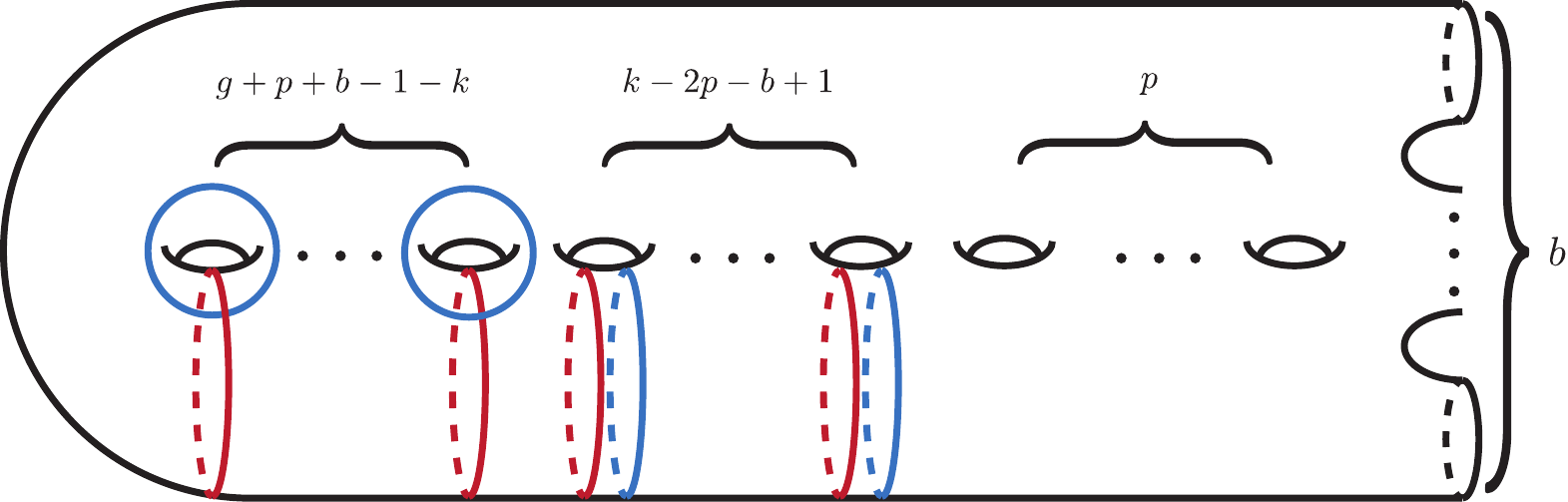}
			\caption
			{After handlesliding each of $(\Sigma; \alpha, \beta)$, $(\Sigma; \alpha, \gamma)$ and $(\Sigma; \beta, \gamma)$, we obtain this diagram. The color represents a different set of curves.}
			\label{reltridiag}
	\end{figure}

\section{Trisections obtained by the Gluck surgery}
The Gluck surgery is a regluing operation on a 2-knot in a 4-manifold.
Let $\mathcal{K}$ be a 2-knot in a 4-manifold $X$ with normal Euler number $0$ and $N(\mathcal{K})$ a regular neighborhood of $\mathcal{K}$.
Let $f$ be the non-trivial self diffeomorphism of $S^2\times S^1$, namely $f(z, e^{i\theta})=(z,ze^{i\theta})$.
Then we can reglue $S^2\times D^2$ to $X\setminus int N(\mathcal{K})$ by the diffeomorphism $f$.
This operation is called the {\it Gluck surgery}. 
We denote the 4-manifold obtained by Gluck twisting  $\Sigma_{\mathcal{K}}(X)$.

At the beginning, we define a term that is used in the main theorem and review a question that is related to the main theorem.

\begin{definition}
Let $X$ be a closed 4-manifold which is diffeomorphic to $S^4$. A trisection diagram of $X$ is \textit{standard} if it is the stabilization of the genus 0 trisection diagram up to surface diffeomorphism and handle slide.
\end{definition}

\begin{question}[Question 6.4 in \cite{GM1}]\label{que:Gay-Meier}
Is the trisection diagram for Gluck twisting on the spin or twist spin of a non-trivial classical knot in $S^3$ obtained by the methods of \cite{M} and \cite{GM1} \textbf{ever} standard?
\end{question}

In this section, we construct the trisection diagram in Question \ref{que:Gay-Meier} for the spun $(2n+1,2)$-torus knot $S(t(2n+1,2))$, where $n \geq 1$. A recipe for constructing the trisection diagram for $S(t(2n+1,2))$ is as follows.

\begin{enumerate}
\item Equip a doubly pointed Heegaard diagram of ($S^3,t(2n+1,2)$).
\item Construct a doubly pointed trisection diagram of ($S^4,S(t(2n+1,2))$) from the doubly pointed Heegaard diagram using Meier's method.
\item Construct a relative trisection diagram of $S^4-S(t(2n+1,2))$ from the doubly pointed trisection diagram by removing the two base points.
\item Construct an arced relative trisection diagram of $S^4-S(t(2n+1,2))$ from the relative trisection diagram using an algorithm for arcs.
\item Construct the trisection diagram for Gluck twisting on $S(t(2n+1,2))$ by gluing Figure \ref{fig:Gluck} and the arced relative trisection diagram (\cite{GM1}).
\end{enumerate}

From now on, we recall the three methods in the recipe. Then, we construct the trisection diagram according to the above recipe in order.

\subsection{Trisection diagrams for the spin of 3-manifolds}
In \cite{M}, Meier constructed a trisection diagram of a 4-manifold $S(M)$, the spin of a closed, connected, orientable 3-manifold $M$, using a Heegaard diagram of $M$.

\begin{theorem}[Theorem 1.4 in \cite{M}]\label{spun}
Let $(\Sigma,\delta,\epsilon)$ be a genus $g$ Heegaard diagram of $M$, where the collection of simple closed curves $\epsilon$ is depicted in the left of Figure \ref{meierspun}. Then, for three collections of simple closed curves $\alpha$, $\beta$ and $\gamma$ depicted in the right of Figure \ref{meierspun}, the 4-tuple $(\Sigma,\alpha,\beta,\gamma)$ is a genus $3g$ trisection diagram of $S(M)$.
\end{theorem}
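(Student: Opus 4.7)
The plan is to exhibit a geometric trisection of $S(M)$ and verify that the diagram induced on its central surface matches the prescribed $(\Sigma,\alpha,\beta,\gamma)$. Using the standard description $S(M)=(M\setminus\mathring{B}^3)\times S^1\cup S^2\times D^2$, I start with the Heegaard splitting $M=H_1\cup_\Sigma H_2$ encoded by $(\Sigma,\delta,\epsilon)$, and arrange the removed $3$-ball so that its boundary $2$-sphere meets $\Sigma$ in a standard curve.

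Next, I partition $S^1$ into three closed arcs $I_1,I_2,I_3$ meeting pairwise at three points $q_{12},q_{23},q_{31}$. This tripartition together with the Heegaard decomposition of $M\setminus\mathring{B}^3$ produces a natural splitting of $(M\setminus\mathring{B}^3)\times S^1$, which I amalgamate with a symmetric tripartition of the collar $S^2\times D^2$ to form three $4$-dimensional regions $X_1,X_2,X_3$. A direct handle count should show $X_i\cong\natural^{k_i}(S^1\times B^3)$ for the $k_i$ predicted by Meier, and that each pairwise intersection $X_i\cap X_j$ is a genus $3g$ handlebody. The central surface $\Sigma_{123}=X_1\cap X_2\cap X_3$ lies over the three junction points $q_{ij}$ and is naturally a connect sum of three copies $\Sigma^{(1)},\Sigma^{(2)},\Sigma^{(3)}$ of $\Sigma$, giving genus $3g$.

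The three cut systems are then read off from the meridian disks of the pairwise $3$-dimensional handlebodies: a copy of $\epsilon$ appears in each $\Sigma^{(i)}$ from the $H_2$-factor, while the $\delta$-curves, together with extra curves coming from the products $H_1\times I_j$ and from the tubes of $S^2\times D^2$, fill out the three families $\alpha$, $\beta$ and $\gamma$. The hypothesis that $\epsilon$ has the special form on the left of Figure \ref{meierspun} is precisely what allows the connecting curves to be standardised into the shape shown on the right.

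The main obstacle is the combinatorial bookkeeping: tracking how the $\delta$ and $\epsilon$ curves propagate through the three copies of $\Sigma$, and verifying that each pair $(\Sigma,\alpha,\beta)$, $(\Sigma,\beta,\gamma)$ and $(\Sigma,\alpha,\gamma)$ is indeed a Heegaard diagram of $\#^{k_i}(S^1\times S^2)$. Once the three pairwise Heegaard diagrams have been checked and the geometric construction confirmed to reassemble $S(M)$, the theorem follows.
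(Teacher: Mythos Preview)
The paper does not prove this statement at all: it is quoted verbatim as Theorem~1.4 from Meier's paper \cite{M} and used as a black box in the construction of Section~3. There is therefore no ``paper's own proof'' to compare against.

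That said, your outline follows the strategy of Meier's original argument: decompose $S(M)=(M\setminus\mathring{B}^3)\times S^1\cup S^2\times D^2$, split the $S^1$ factor into three arcs, and combine this with the given Heegaard splitting of $M$ to build the three sectors $X_i$. What you have written, however, is a plan rather than a proof. The phrases ``a direct handle count should show'' and ``the main obstacle is the combinatorial bookkeeping'' flag exactly the steps where the actual work lies, and you have not carried them out. In particular, you have not identified the meridian systems of the three handlebodies $X_i\cap X_j$ concretely enough to see that they match the curves in the right of Figure~\ref{meierspun}, nor have you verified the pairwise $\#^{k_i}(S^1\times S^2)$ condition. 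If your goal is to supply a self-contained proof, you would need to fill in those identifications explicitly (as Meier does); for the purposes of the present paper, citing \cite{M} is what the authors do and is sufficient.
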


	  \begin{figure}[h]
		\centering
			\includegraphics[scale=0.6]{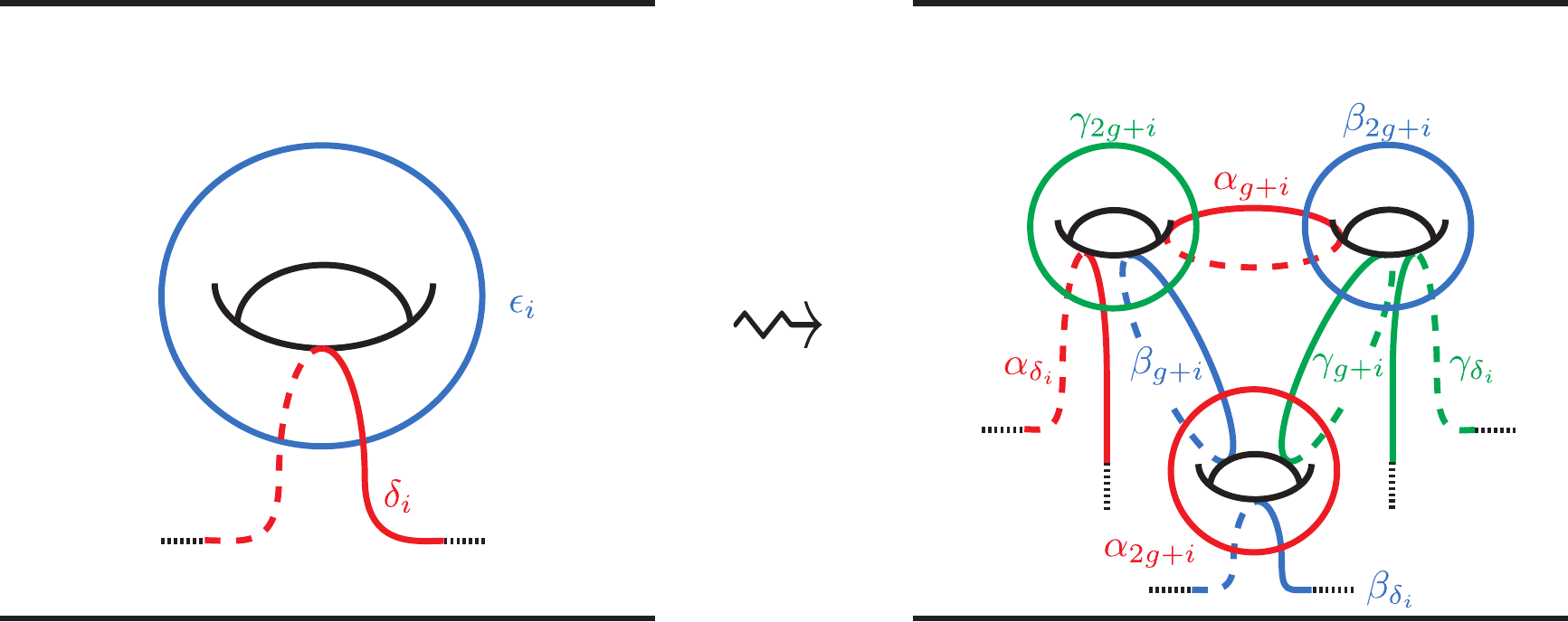}
			\caption
			{We can construct a trisection diagram of a trisection obtained by the spin of a Heegaard splitting.}
			\label{meierspun}
	\end{figure}


He also constructed a doubly pointed trisection diagram of $(S(M),S(K))$ from a doubly pointed Heegaard diagram of $(M,K)$ in the same way, where $K$ is a classical knot in $M$ and $S(K)$ is the spin of $K$ (Theorem 1.5 in \cite{M}).


\subsection{Describing trisection diagrams for Gluck twisting}
In \cite{GM1}, Gay and Meier described a trisection diagram of $\Sigma_K(S^4)$ using an arced relative trisection diagram of $S^4-K$.

\begin{theorem}[Lemma 5.5 in \cite{GM1}]\label{1-bridge}
Let $K$ be a 2-knot in $S^4$ which is in 1-bridge position. Then, a trisection diagram of $\Sigma_K(S^4)$ is obtained by gluing Figure \ref{fig:Gluck} and an arced relative trisection diagram of $S^4-K$.
\end{theorem}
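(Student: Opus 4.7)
The plan is to decompose the problem into two pieces glued along a common boundary. Since $K$ sits in $S^4$ in $1$-bridge position, the complement $S^4\setminus\nu(K)$ inherits a relative trisection, and by hypothesis its arced relative trisection diagram is already in hand. The boundary $\partial(S^4\setminus\nu(K))\cong S^2\times S^1$ carries the open book decomposition induced by the relative trisection, and the arcs in the arced diagram are precisely the bookkeeping device that records this open book up to handle-slide equivalence.

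Separately, I would exhibit a standard relative trisection of the regluing region $\nu(K)\cong S^2\times D^2$ whose arced diagram is Figure \ref{fig:Gluck}. The open book on $\partial(S^2\times D^2)\cong S^2\times S^1$ induced by this relative trisection should be arranged so that, after applying the Gluck diffeomorphism $f(z,e^{i\theta})=(z,ze^{i\theta})$ on the boundary, it agrees with the open book on $\partial(S^4\setminus\nu(K))$ produced by the arced diagram from the previous step. Figure \ref{fig:Gluck} is meant to be precisely this twisted standard model.

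With the two ingredients in hand, I would appeal to the general principle that two relatively trisected $4$-manifolds whose boundary open books match can be glued sector by sector to produce a trisection of the union. Gluing $\nu(K)$ back to $S^4\setminus\nu(K)$ along $f$ yields $\Sigma_K(S^4)$, and at the level of diagrams this operation amounts to concatenating Figure \ref{fig:Gluck} with the arced relative trisection diagram of $S^4\setminus\nu(K)$ along matched arc endpoints, producing a trisection diagram on a closed surface of genus equal to the sum of the relative genera.

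The main obstacle is verifying the compatibility claim above. One has to check that the identification prescribed by Figure \ref{fig:Gluck} really corresponds to the Gluck twist, rather than to the trivial regluing (which would yield instead the standard trisection of $S^4$), and that after the arcs are matched the resulting three systems of simple closed curves on the closed surface close up to form a bona fide trisection diagram. This reduces to tracking how $f$ acts on the monodromy of the boundary open book on $S^2\times S^1$ and confirming that the relative positions of the curves and arcs in Figure \ref{fig:Gluck} realize exactly this action; once this is done, the conclusion is immediate from the gluing principle.
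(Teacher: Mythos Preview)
The paper does not prove this statement at all: it is quoted verbatim as Lemma~5.5 of \cite{GM1} and used as a black box in the recipe of Section~3. There is therefore no ``paper's own proof'' to compare against; the authors simply cite the result and move on.

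That said, your outline is a faithful sketch of the argument Gay and Meier actually give in \cite{GM1}. The gluing principle you invoke is their Theorem~4.1 (building on Castro--Gay--Pinz\'on-Caicedo \cite{CGP,CGP2}), and the identification of Figure~\ref{fig:Gluck} as the arced relative trisection diagram of $S^2\times D^2$ whose boundary open book matches the complement after the Gluck twist is exactly the content of their Lemma~5.5. Your last paragraph correctly isolates the only nontrivial point: checking that the specific diagram in Figure~\ref{fig:Gluck} encodes the Gluck regluing rather than the trivial one. In \cite{GM1} this is done by explicitly computing the monodromy of the induced open book and comparing it to the effect of $f$; you have not carried out that computation, so as written your proposal is an accurate strategy rather than a complete proof.
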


Note that the trisection diagram of $\Sigma_K(S^4)$ is independent of the choice of arcs of a relative trisection diagram of $S^4-K$. 

\subsection{An algorithm of drawing arcs for relative trisection diagrams} 
An algorithm for drawing arcs of a relative trisection diagram is developed in \cite{CGP2}.

\begin{theorem}[Theorem 5 in \cite{CGP2}]\label{algorithm}
Let $(\Sigma,\alpha,\beta,\gamma)$ be a relative trisection diagram. Also let $\Sigma_\alpha$ be the surface obtained by surgering $\Sigma$ along $\alpha$ and $\phi \colon \Sigma-\alpha \to \Sigma_\alpha$ an associated embedding. Then, arcs $a$, $b$ and $c$ in $\Sigma$ are obtained by following the procedure below in order.
\begin{enumerate}
\item There exists a collection of properly embedded arcs $\delta$ in $\Sigma_\alpha$ such that cutting $\Sigma_\alpha$ along $\delta$ produces the disk. Then, let $a$ denote a collection of properly embedded arcs in $\Sigma-\alpha$ such that $\delta$ is isotopic to $\phi(a)$ in $\Sigma_\alpha$.
\item A collection of arcs in $\Sigma$ which does not intersect $\beta$ geometrically is obtained by sliding a copy of $a$ over $\alpha$. Then, let $b$ denote the collection of arcs. Note that in this operation, sliding a curve of $\beta$ over other $\beta$ curves can be performed if necessary.
\item A collection of arcs in $\Sigma$ which does not intersect $\gamma$ geometrically is obtained by sliding a copy of $b$ over $\beta$. Then, let $c$ denote the collection of arcs. Note that in this operation, sliding a curve of $\gamma$ over other $\gamma$ curves can be performed if necessary.
\end{enumerate}
\end{theorem}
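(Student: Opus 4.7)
The plan is to verify that the three arc systems $a$, $b$, $c$ produced by the procedure, together with $(\Sigma;\alpha,\beta,\gamma)$, satisfy the defining conditions of an arced relative trisection diagram: namely, (i) each of $\alpha\cup a$, $\beta\cup b$, $\gamma\cup c$ has disk complement in $\Sigma$, (ii) the three arc systems have matching endpoints on $\partial\Sigma$, and (iii) $b$ is obtained from $a$ by handle slides over $\alpha$ (allowing slides among $\beta$), and $c$ from $b$ by handle slides over $\beta$ (allowing slides among $\gamma$).

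For step (1), the surface $\Sigma_\alpha$ is by construction obtained from $\Sigma$ by cutting along $\alpha$ and capping the resulting boundary circles with disks, and $\phi\colon \Sigma-\alpha\to \Sigma_\alpha$ is the induced embedding. A collection of properly embedded arcs $\delta\subset \Sigma_\alpha$ cuts $\Sigma_\alpha$ into a disk if and only if its preimage $a=\phi^{-1}(\delta)$ is disjoint from $\alpha$ and $\Sigma\setminus (\alpha\cup a)$ is a disk. The existence of $\delta$ is elementary, since $\Sigma_\alpha$ is a surface with nonempty boundary. This establishes condition (i) for the pair $(\alpha,a)$, and condition (iii) holds vacuously at this stage.

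For step (2), sliding $a$ over an $\alpha$-curve preserves both the property $a\cap\alpha=\emptyset$ and the disk complement property of $\alpha\cup a$, since such a slide corresponds to an isotopy of the arc in the handlebody compressed by $\alpha$. The nontrivial point is to justify that a finite sequence of such slides, combined with slides among the $\beta$ curves, produces an arc system disjoint from $\beta$. I would do this by appealing to the relative trisection hypothesis: $(\Sigma;\alpha,\beta)$ is slide equivalent to the standard model of Figure \ref{reltridiag}, in which the required arcs can be read off by direct inspection. Transporting the standard arcs back through the inverse slide sequence yields $b$, and the disk complement property for $\beta\cup b$ is preserved throughout, because on the standard model the symmetric roles of $\alpha$ and $\beta$ make it manifest. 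Step (3) is obtained from step (2) by replacing $(\alpha,\beta,a)$ with $(\beta,\gamma,b)$ and applying the same argument.

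The main obstacle I anticipate is making the handle slide argument of step (2) precise: one must choose the normalizing slides so that they carry $a$ to an arc system disjoint from $\beta$ without breaking the disk complement property at any intermediate stage, and the resulting slide sequence must be expressible purely as slides of $a$ over $\alpha$ together with slides of $\beta$ over $\beta$. This is a combinatorial bookkeeping problem on a genus $g$ surface with $b$ boundary components and is best carried out by explicitly tracking the effect of each slide on both the curves and the arcs using the standard model.
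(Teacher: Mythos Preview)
The paper does not prove this theorem; it is quoted verbatim as Theorem~5 of \cite{CGP2} and used as a black box. There is therefore no ``paper's own proof'' to compare against.

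As to your outline itself: the strategy is the correct one, and indeed is essentially the one in \cite{CGP2}. However, what you call the ``main obstacle'' is the entire content of the result, and you have not resolved it. The subtle point is this: a slide equivalence between $(\Sigma;\alpha,\beta)$ and the standard model consists of slides of $\alpha$ over $\alpha$ \emph{and} slides of $\beta$ over $\beta$, intermixed in some order. You need to convert this into a sequence of slides of the arc system $a$ over $\alpha$ (together with $\beta$-over-$\beta$ slides) that lands $a$ in position disjoint from $\beta$. Slides of $\alpha$ over $\alpha$ do not literally move $a$; rather, one must argue that after the $\alpha$-slides the cut system $\alpha'\cup a$ still has disk complement, and then that $a$ can be slid over $\alpha'$ to clear $\beta'$. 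The clean way to do this is to observe that any two cut systems for the same handlebody are related by handle slides and isotopy, so $\alpha\cup a$ and the standard $\alpha'\cup a'$ (where $a'$ is disjoint from $\beta'$) are slide equivalent as full cut systems; the extra step is to check that such a slide equivalence can be arranged to carry $\alpha$ to $\alpha'$ and $a$ to $a'$ \emph{separately}, which uses that both $\alpha$ and $\alpha'$ are nonseparating and bound disks in the same handlebody. Your sketch gestures at this but does not supply it.
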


\subsection{Constructing trisection diagrams for Gluck twisting}

In this subsection, we will finally construct the trisection diagram according to the recipe described above, utilizing the methods explained in the three previous subsections. Here, we will use the notation $t_c$ to represent the right-handed Dehn twist along a simple closed curve $c$.


\begin{lemma}\label{lem:(p,-2)_dpHd}
Let $(\Sigma, \epsilon, \zeta, z, w)$ be a doubly-pointed Heegaard diagram of $(S^3, t(3,2))$ depicted in Figure \ref{fig:(p,-2)_dpHd}. Then, for $n \ge 1$, $(\Sigma, \epsilon, t_{\delta}^{-(n-1)}(\zeta), z, w)$ is a doubly-pointed Heegaard diagram of $(S^3, t(2n+1,2))$, where $\delta$ is the curve depicted in Figure \ref{fig:(p,-2)_dpHd}.
\end{lemma}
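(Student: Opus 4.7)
The plan is to separate the statement into two assertions: (i) the modified 4-tuple is a Heegaard diagram of $S^3$, and (ii) the two base points $z,w$ specify a bridge presentation of $t(2n+1,2)$. I would then prove (i) by a handlebody argument and (ii) by induction on $n$, using the fact that consecutive $(2k+1,2)$-torus knots differ by a full twist on two parallel strands.

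For (i), I would inspect Figure \ref{fig:(p,-2)_dpHd} to verify that the curve $\delta$ bounds a compressing disk in the handlebody determined by $\epsilon$. Once this is checked, the right-handed Dehn twist $t_\delta$ extends to a self-diffeomorphism of the $\epsilon$-handlebody that is the identity on $\Sigma$ away from a collar of $\delta$. Consequently $t_\delta^{-(n-1)}$ carries the Heegaard splitting defined by $(\Sigma,\epsilon,\zeta)$ to the splitting defined by $(\Sigma,\epsilon,t_\delta^{-(n-1)}(\zeta))$ via an ambient diffeomorphism of $S^3$, so the new diagram still presents $S^3$.

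For (ii), the key geometric observation is that $\delta$ is positioned so that, in the bridge presentation determined by $(\Sigma,\epsilon,\zeta,z,w)$, the two bridge strands running through the $\zeta$-handlebody cross $\delta$ algebraically (and geometrically) twice with opposite orientation on the two strands. A negative Dehn twist $t_\delta^{-1}$ applied to the $\zeta$-curves is then equivalent, on the knot side, to inserting one negative full twist between those two strands. Since the $(3,2)$-torus knot is obtained from the two-strand braid $\sigma_1^{3}$ and the $(2n+1,2)$-torus knot from $\sigma_1^{2n+1}$, inserting $n-1$ negative full twists (equivalently, $-(n-1)$ copies of $\sigma_1^{2}$ in the chosen convention, up to the sign that the figure forces) converts the trefoil bridge presentation into a bridge presentation of $t(2n+1,2)$. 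I would make this rigorous by writing the bridge arcs as a two-braid relative to $\Sigma$ and computing the braid word before and after the Dehn twist.

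Given these two ingredients, the induction is immediate: the base case $n=1$ is the hypothesis on $(\Sigma,\epsilon,\zeta,z,w)$, and passing from $n=k$ to $n=k+1$ applies one more $t_\delta^{-1}$, adding one more full twist and producing $t(2k+3,2)=t(2(k+1)+1,2)$. The main obstacle is the verification in step (ii): identifying the correct sign and multiplicity of the twist induced by $t_\delta^{-1}$ on the knot, which requires a careful reading of how $\delta$ meets the bridge arcs in Figure \ref{fig:(p,-2)_dpHd} and how the reconstruction of the knot from a doubly pointed Heegaard diagram is affected by a Dehn twist of the $\zeta$-system along a curve bounding in the $\epsilon$-handlebody. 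Everything else reduces to inspection of the figure and the standard fact that the two-bridge torus knots $t(2n+1,2)$ form a family related by iterated $(1/k)$-twists on a pair of parallel strands.
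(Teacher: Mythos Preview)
Your proposal is correct and follows essentially the same inductive skeleton as the paper, but with a genuinely more explicit geometric argument for the inductive step.  The paper's proof consists of two sentences: it invokes Ording's method \cite{O} as a black box to verify that one application of $t_\delta^{-1}$ sends the trefoil diagram to a diagram for $t(5,2)$, and then says ``perform this operation $n-1$ times''.  In particular, the paper does not separate the problem into your parts (i) and (ii), nor does it explain why the Dehn twist corresponds to inserting a full twist on the two bridge strands; all of that is hidden in the citation.

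What you gain by unpacking the geometry is a self-contained argument that does not depend on the reader knowing \cite{O}: once one checks from the figure that $\delta$ bounds in the $\epsilon$--handlebody (on a genus-one surface this amounts to $\delta$ being isotopic to $\epsilon$ in the complement of the base points) and that the bridge arc on the $\zeta$--side crosses the twist region as you describe, the induction is transparent.  What the paper's approach buys is brevity and an explicit link to an existing algorithm for $(1,1)$--knots.  One cautionary remark on your part~(i): the sentence ``$t_\delta^{-(n-1)}$ carries the Heegaard splitting \dots\ via an ambient diffeomorphism of $S^3$'' is a little glib.  The twist extends over $H_\epsilon$, and this suffices to show the underlying $3$--manifold is unchanged, but the extension over $H_\epsilon$ will in general move the $\epsilon$--side bridge arc nontrivially; that is precisely why the knot changes, so you should phrase (i) as equivalence of Heegaard diagrams rather than as an ambient isotopy carrying one doubly-pointed diagram to the other.
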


\begin{proof}
We can check, using a method in \cite{O}, that the 4-tuple $(\Sigma, \epsilon, t_{\delta}^{-1}(\zeta), z, w)$ is a doubly-pointed Heegaard diagram of $(S^3, t(5,2))$. If we perform this operation $n-1$ times, the proof will be completed.
\end{proof}

\begin{figure}[h]
\begin{center}
\includegraphics[scale=0.7]{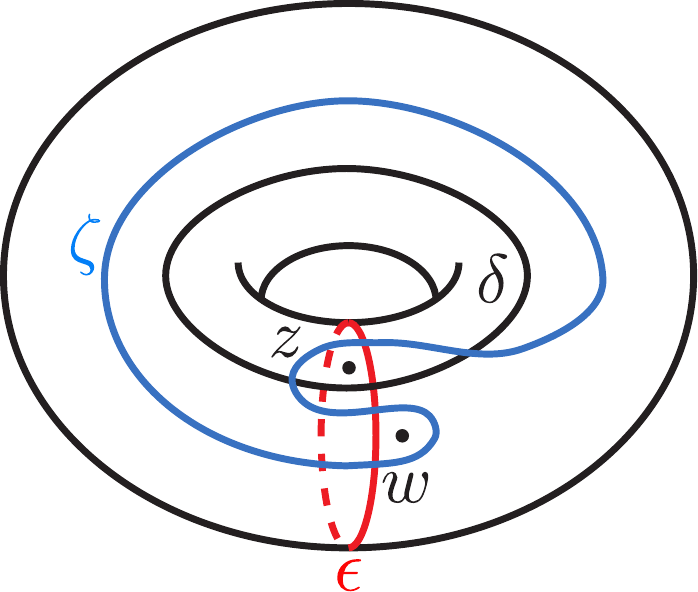}
\end{center}
\caption{A doubly-pointed Heegaard diagram of $(S^3, t(3,2))$ and the reference curve $\delta$.}
\label{fig:(p,-2)_dpHd}
\end{figure}

The following lemma is obtained from Lemma \ref{lem:(p,-2)_dpHd} by the method in \cite{M}. More precisely, we construct a doubly pointed trisection diagram from the doubly pointed Heegaard diagram as prescribed in Lemma \ref{lem:(p,-2)_dpHd}, following the procedure shown in Figure \ref{meierspun}. However, in this case, we need to take into account the presence of the base points. 

\begin{lemma}\label{lem:(p,-2)_dptd}
Let $(\Sigma, \alpha, \beta, \gamma, z, w)$ be a doubly-pointed trisection diagram of $(S^4, S(t(3,2)))$ depicted in Figure \ref{fig:S(p,-2)_dptd}. Then, for $n \ge 1$, $(\Sigma, \alpha', \beta', \gamma', z, w)$ is a doubly-pointed trisection diagram of $(S^4, S(t(2n+1,2)))$, where 
\begin{itemize}
\item $\alpha' = (t_{\delta_1}^{-(n-1)}(\alpha_1), \alpha_2, \alpha_3)$,
\item $\beta' = (t_{\delta_2}^{-(n-1)}(\beta_1), \beta_2, \beta_3)$,
\item $\gamma' = (t_{\delta_3}^{-(n-1)}(\gamma_1), \gamma_2, \gamma_3)$,
\end{itemize}
and curves $\delta_1$, $\delta_2$ and $\delta_3$ are them depicted in Figure \ref{fig:S(p,-2)_dptd}.
\end{lemma}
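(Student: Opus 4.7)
The plan is to apply Meier's doubly-pointed spinning construction (Theorem 1.5 of \cite{M}, which is the knotted refinement of Theorem \ref{spun}) directly to the doubly-pointed Heegaard diagram $(\Sigma, \epsilon, t_{\delta}^{-(n-1)}(\zeta), z, w)$ of $(S^3, t(2n+1,2))$ supplied by Lemma \ref{lem:(p,-2)_dpHd}. By Theorem 1.5 of \cite{M}, the output is automatically a doubly-pointed trisection diagram of $(S^4, S(t(2n+1,2)))$; the only remaining task is to identify the curves it produces with the triple $(\alpha',\beta',\gamma')$ described in the statement.

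The base case $n=1$ is tautological: by construction, $(\Sigma,\alpha,\beta,\gamma,z,w)$ in Figure \ref{fig:S(p,-2)_dptd} is exactly the output of Meier's procedure applied to $(\Sigma,\epsilon,\zeta,z,w)$. For the inductive step, I would exploit the fact that Meier's recipe (Figure \ref{meierspun}) is \emph{natural} with respect to diffeomorphisms of the Heegaard surface supported away from the basepoints. Concretely, the single reference curve $\delta$, which is disjoint from $z,w$, lifts under spinning to three disjoint simple closed curves on the genus $3g$ spun surface, one in each of the three ``thirds'' carrying the $\alpha$-, $\beta$-, and $\gamma$-families; reading off Figure \ref{fig:S(p,-2)_dptd}, these three lifts are precisely $\delta_1,\delta_2,\delta_3$. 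Similarly, the curve $\zeta$ has exactly one lift in each of the three families, and these are $\alpha_1,\beta_1,\gamma_1$.

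Once this dictionary is established, the Dehn twist $t_{\delta}^{-(n-1)}$ on the Heegaard surface commutes with Meier's spinning in the sense that spinning the twisted diagram equals spinning the untwisted one and then applying $t_{\delta_i}^{-(n-1)}$ to the single lift of $\zeta$ living in the $i$-th family, for $i=1,2,3$. This is precisely the recipe $\alpha' = (t_{\delta_1}^{-(n-1)}(\alpha_1), \alpha_2, \alpha_3)$, $\beta' = (t_{\delta_2}^{-(n-1)}(\beta_1), \beta_2, \beta_3)$, $\gamma' = (t_{\delta_3}^{-(n-1)}(\gamma_1), \gamma_2, \gamma_3)$, so the lemma follows.

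The main obstacle is bookkeeping rather than conceptual: one must carefully verify from Figures \ref{fig:(p,-2)_dpHd} and \ref{fig:S(p,-2)_dptd} that the labelled curves $\delta_1,\delta_2,\delta_3$ really are the three lifts of $\delta$ under Meier's spinning pattern, and that the twisted curves $\alpha_1,\beta_1,\gamma_1$ really are the three lifts of $\zeta$. Once this correspondence is read off the pictures, the equivariance of spinning under basepoint-disjoint surface automorphisms makes the Dehn twist calculation automatic, and the doubly-pointed trisection property is then immediate from Theorem 1.5 of \cite{M}.
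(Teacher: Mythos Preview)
Your proposal is correct and follows exactly the route the paper indicates: the paper gives no formal proof of this lemma, stating only that it ``is obtained from Lemma \ref{lem:(p,-2)_dpHd} by the method in \cite{M}'' (i.e., Theorem 1.5 of \cite{M}, the pointed version of Theorem \ref{spun}), with the reminder that one must track the basepoints through Figure \ref{meierspun}. Your write-up simply unpacks this sentence---recording that $\delta$ has three lifts $\delta_1,\delta_2,\delta_3$ in the spun surface, that $\zeta$ has lifts $\alpha_1,\beta_1,\gamma_1$, and that the basepoint-disjoint Dehn twist $t_\delta^{-(n-1)}$ therefore transports through the spinning construction to the three twists $t_{\delta_i}^{-(n-1)}$---so there is no methodological difference to discuss.
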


\begin{figure}[h]
\begin{center}
\includegraphics[scale=0.6]{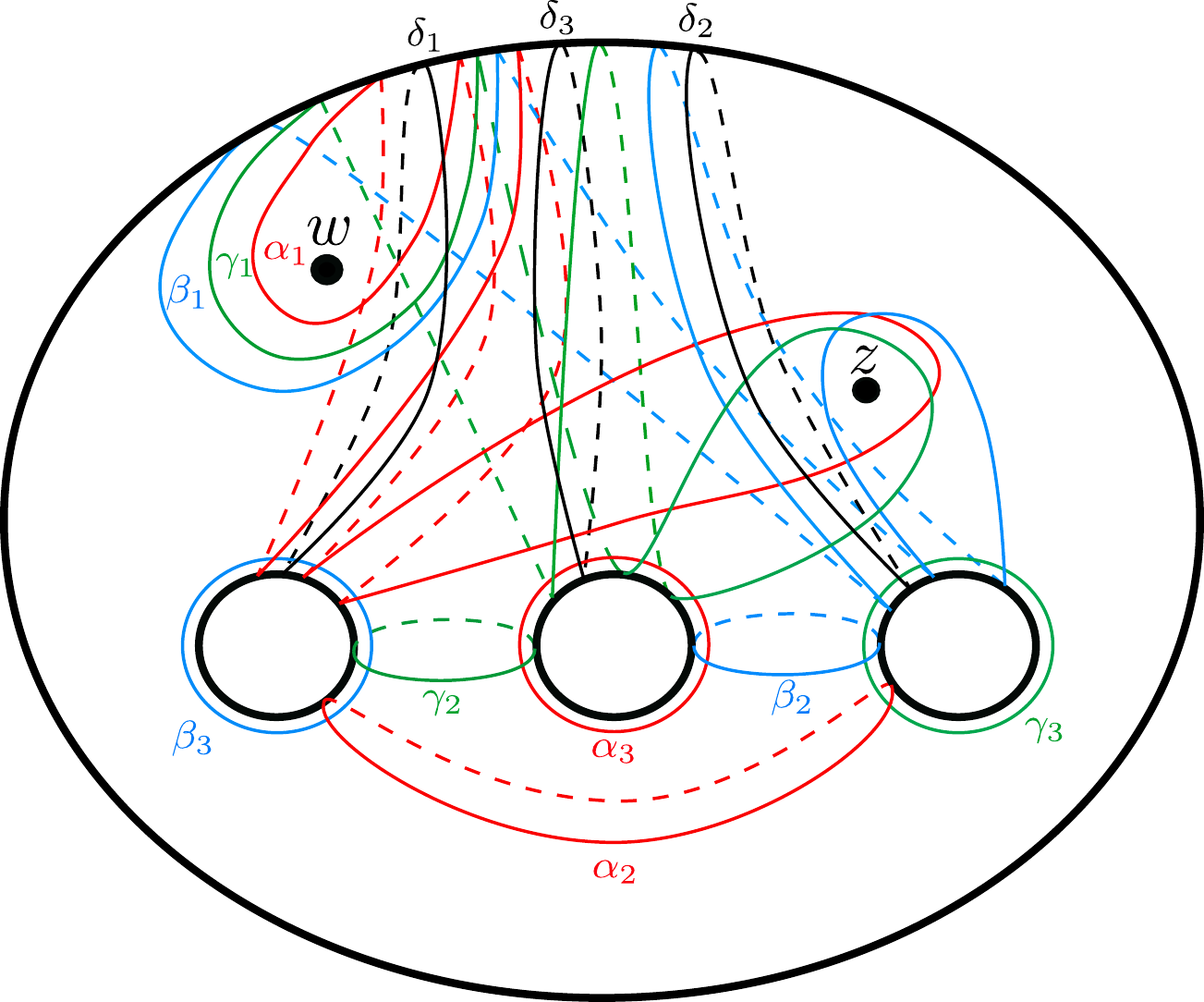}
\end{center}
\setlength{\captionmargin}{50pt}
\caption{A doubly-pointed trisection diagram of $(S^4, S(t(3,2)))$ and the reference curves $\delta_1$, $\delta_2$ and $\delta_3$.}
\label{fig:S(p,-2)_dptd}
\end{figure}

If a doubly pointed trisection diagram is given, a relative trisection diagram of the complement of the 2-knot represented by the diagram can be obtained by excluding a neighborhood of the base points, following the procedure described in subsection 4.1 of \cite{GM1}.  Referring to subsection 4.1 of \cite{GM1} will provide more detailed information on this construction.

Hence, the following lemma is obtained from Lemma \ref{lem:(p,-2)_dptd} by removing the open tubular neighborhood of the two base points $z$ and $w$ in Figure \ref{fig:S(p,-2)_dptd}. Note that since $\delta_1$ can be parallel to $\delta_2$ by sliding over $\alpha_2$ and $\alpha_3$, we can replace the role of $\delta_1$ with the one of $\delta_2$ for $\alpha^{'}$.

\begin{lemma}\label{lem:(p,-2)_rtd}
Let $(\Sigma, \alpha, \beta, \gamma)$ be a relative trisection diagram of $S^4-S(t(3,2))$ depicted in Figure \ref{fig:S(p,-2)_rtd}. Then, for $n \ge 1$, $(\Sigma, \alpha', \beta', \gamma')$ is a relative trisection diagram of $S^4-S(t(2n+1,2))$, where 
\begin{itemize}
\item $\alpha' = (t_{\delta_2}^{-(n-1)}(\alpha_1), \alpha_2, \alpha_3)$,
\item $\beta' = (t_{\delta_2}^{-(n-1)}(\beta_1), \beta_2, \beta_3)$,
\item $\gamma' = (t_{\delta_3}^{-(n-1)}(\gamma_1), \gamma_2, \gamma_3)$,
\end{itemize}
and curves $\delta_2$ and $\delta_3$ are them depicted in Figure \ref{fig:S(p,-2)_rtd}.
\end{lemma}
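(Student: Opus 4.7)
The plan is to deduce this lemma as a direct corollary of Lemma \ref{lem:(p,-2)_dptd} via the standard procedure, described in subsection 4.1 of \cite{GM1}, that converts a doubly-pointed trisection diagram of a pair $(S^4, K)$ into a relative trisection diagram of the exterior $S^4 - K$. Concretely, I would start from the doubly-pointed diagram $(\Sigma, \alpha', \beta', \gamma', z, w)$ produced by Lemma \ref{lem:(p,-2)_dptd}, excise an open tubular neighborhood of each base point $z$ and $w$, and verify that the resulting bounded surface together with the same three curve systems yields a relative trisection diagram of $S^4 - S(t(2n+1, 2))$.

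The next step is to reconcile the only apparent discrepancy between the two statements: Lemma \ref{lem:(p,-2)_dptd} twists $\alpha_1$ along the curve $\delta_1$, whereas the present lemma twists along $\delta_2$. Following the remark preceding the lemma, I would trace through Figure \ref{fig:S(p,-2)_dptd} to confirm that, after the base-point excision, $\delta_1$ can be made parallel to $\delta_2$ by slides over $\alpha_2$ and $\alpha_3$. Since such slides of the reference curve for a Dehn twist do not change the slide-equivalence class of the twisted $\alpha$-system, replacing $\delta_1$ with $\delta_2$ is harmless, and this change affects neither $\beta'$ nor $\gamma'$, which are already expressed in terms of $\delta_2$ and $\delta_3$ respectively.

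Finally, it remains to confirm that each of the three pairs $(\Sigma, \alpha', \beta')$, $(\Sigma, \beta', \gamma')$, and $(\Sigma, \alpha', \gamma')$ is slide-equivalent to the standard relative trisection picture of Figure \ref{reltridiag}. Since the Dehn twists $t_{\delta_2}^{-(n-1)}$ and $t_{\delta_3}^{-(n-1)}$ are supported away from the punctures and act compatibly on the relevant curve pairs, these slide-equivalences propagate from the $n = 1$ case (Figure \ref{fig:S(p,-2)_rtd}) to arbitrary $n$ by naturality of the twists. I expect the main obstacle to be the picture-level bookkeeping: one must check carefully that the reference curves $\delta_2$ and $\delta_3$ remain properly embedded and disjoint from the removed disks about $z$ and $w$, and that the slide of $\delta_1$ onto $\delta_2$ can indeed be realized in the punctured surface. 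This is routine figure-chasing rather than a conceptually new argument, so after the verifications above the conclusion follows.
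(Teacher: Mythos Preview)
Your proposal is correct and follows essentially the same route as the paper: the authors state explicitly that the lemma is obtained from Lemma~\ref{lem:(p,-2)_dptd} by removing open neighborhoods of the base points $z$ and $w$ (as in subsection~4.1 of \cite{GM1}), and they handle the $\delta_1$-versus-$\delta_2$ discrepancy exactly as you do, noting that $\delta_1$ becomes parallel to $\delta_2$ after sliding over $\alpha_2$ and $\alpha_3$. Your third paragraph of verifications is more detail than the paper provides, but it is in the same spirit and introduces no new ideas.
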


\begin{figure}[h]
\begin{center}
\includegraphics[scale=0.6]{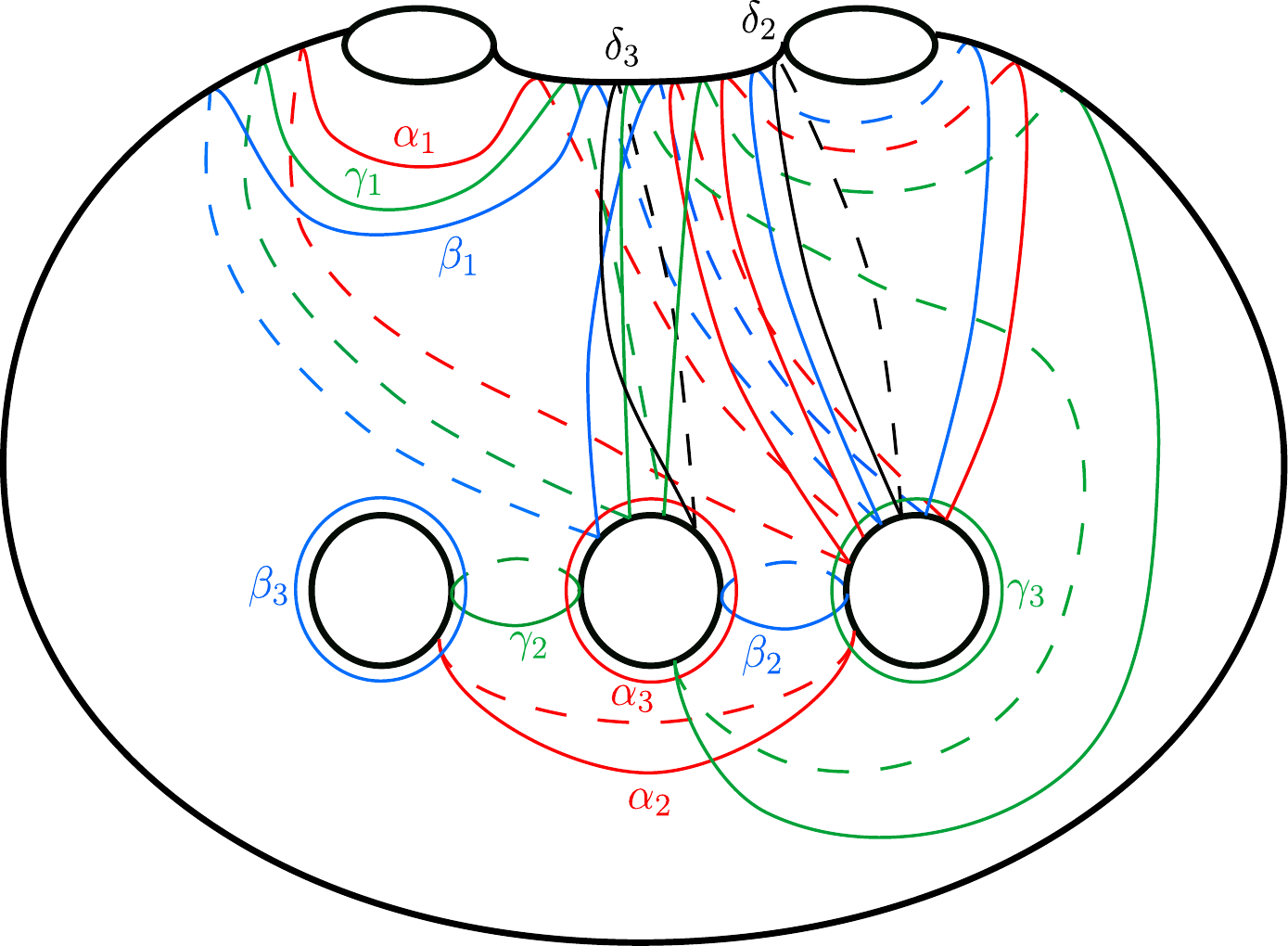}
\end{center}
\setlength{\captionmargin}{50pt}
\caption{A relative trisection diagram of $S^4-S(t(3,2)))$ and the reference curves $\delta_2$ and $\delta_3$.}
\label{fig:S(p,-2)_rtd}
\end{figure}

The following lemma is obtained from Lemma \ref{lem:(p,-2)_rtd} using Theorem \ref{algorithm}.

\begin{lemma}\label{lem:(p,-2)_artd}
Let $(\Sigma, \alpha, \beta, \gamma, a, b, c)$ be an arced relative trisection diagram of $S^4-S(t(3,2))$ depicted in Figure \ref{fig:S(p,-2)_artd}. Then, for $n \ge 1$, $(\Sigma, \alpha', \beta', \gamma', a', b', c')$ is an arced relative trisection diagram of $S^4-S(t(2n+1,2))$, where 
\begin{itemize}
\item $\alpha' = (t_{\delta_2}^{-(n-1)}(\alpha_1), \alpha_2, \alpha_3)$,
\item $\beta' = (t_{\delta_2}^{-(n-1)}(\beta_1), \beta_2, \beta_3)$,
\item $\gamma' = (t_{\delta_3}^{-(n-1)}(\gamma_1), \gamma_2, \gamma_3)$,
\item $a' = t_{\delta_2}^{-(n-1)}(a)$,
\item $b' = t_{\delta_2}^{-(n-1)}(b)$,
\item $c' = t_{\delta_3}^{-(n-1)}(c)$,
\end{itemize}
and curves $\delta_2$ and $\delta_3$ are them depicted in Figure \ref{fig:S(p,-2)_artd}.
\end{lemma}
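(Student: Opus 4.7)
The plan is to proceed in two stages. First, establish the case $n=1$ by applying the algorithm of Theorem \ref{algorithm} directly to the relative trisection diagram of Lemma \ref{lem:(p,-2)_rtd}. Then extend the conclusion to arbitrary $n\geq 2$ by observing that the reference Dehn twists $t_{\delta_2}$ and $t_{\delta_3}$ act on the arcs equivariantly with respect to the three conditions of the algorithm.

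For the base case $n=1$, the triple $(\Sigma,\alpha,\beta,\gamma)$ of Lemma \ref{lem:(p,-2)_rtd} is the relative trisection diagram of $S^4-S(t(3,2))$. I would run the three steps of Theorem \ref{algorithm}: first choose arcs $\delta$ in the surgered surface $\Sigma_{\alpha}$ whose complement is a disk, and pull them back to arcs $a\subset \Sigma-\alpha$; next, slide a copy of $a$ over the $\alpha$-curves (sliding $\beta$-curves over one another if needed) to produce $b$ disjoint from $\beta$; finally, slide a copy of $b$ over the $\beta$-curves to produce $c$ disjoint from $\gamma$. A direct inspection of Figure \ref{fig:S(p,-2)_artd} verifies that the arcs drawn there satisfy these three conditions.

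For general $n\geq 2$, the key observation is that the reference curves in Figure \ref{fig:S(p,-2)_artd} are positioned so that $\delta_2$ is disjoint from $\alpha_2,\alpha_3,\beta_2,\beta_3$ and $\delta_3$ is disjoint from $\gamma_2,\gamma_3$. Consequently the self-diffeomorphisms $t_{\delta_2}^{-(n-1)}$ and $t_{\delta_3}^{-(n-1)}$ of $\Sigma$ fix those curves setwise, and $t_{\delta_2}^{-(n-1)}$ carries the pair $(\alpha_1,\beta_1)$ to $(\alpha'_1,\beta'_1)$ while $t_{\delta_3}^{-(n-1)}$ carries $\gamma_1$ to $\gamma'_1$. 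Applying $t_{\delta_2}^{-(n-1)}$ to $(a,b)$ and $t_{\delta_3}^{-(n-1)}$ to $c$ produces the claimed $(a',b',c')$, and each of the three conditions of Theorem \ref{algorithm} transports covariantly: condition (1) is preserved by any self-diffeomorphism of $\Sigma$; disjointness of $b'$ from $\beta'$ follows because $t_{\delta_2}^{-(n-1)}$ takes the pair $(b,\beta)$ to $(b',\beta')$; and disjointness of $c'$ from $\gamma'$ follows because $t_{\delta_3}^{-(n-1)}$ takes $(c,\gamma)$ to $(c',\gamma')$.

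The main technical obstacle is verifying the intersection data of $\delta_2$ and $\delta_3$ with the unaffected curves and with the arcs $a,b,c$, and in particular checking that the slides used to pass from $b$ to $c$ in the base case remain compatible under the two distinct Dehn twists $t_{\delta_2}$ and $t_{\delta_3}$ (which may not agree on the portion of $\Sigma$ containing $b$). This reduces to a careful but finite inspection of Figure \ref{fig:S(p,-2)_artd}, after which the equivariance argument sketched above completes the proof.
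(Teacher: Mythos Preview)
Your proposal follows essentially the same two-stage strategy as the paper: establish the $n=1$ arcs via Theorem~\ref{algorithm}, then transport to general $n$ by compatibility with the reference Dehn twists. The paper's proof is terser and supplies precisely the ingredient that resolves the ``main technical obstacle'' you flag at the end: it observes that in constructing $b$ from $a$ the handle-slide arcs over the $\alpha$ curves can be chosen disjoint from $\delta_2$, and likewise in constructing $c$. Once the slide paths avoid the reference curves, the handle slides commute with $t_{\delta_2}^{-(n-1)}$ (respectively $t_{\delta_3}^{-(n-1)}$), so the conditions of Theorem~\ref{algorithm} for $(a',b',c')$ follow directly from those for $(a,b,c)$, and your separate equivariance and disjointness bookkeeping becomes unnecessary.
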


\begin{proof}
The arcs $a,b,c$ can be drawn by Theorem \ref{algorithm}. In drawing $b$, we can slide $a$ over $\alpha$ curves so that arcs taken in this handle slide do not intersect $\delta_2$. We can draw $c$ similarly. Hence, the arcs $a',b',c'$ can be obtained.
\end{proof}

Let $\mathcal{D}_{S(t(2n+1,2))}$ denote this arced relative trisection diagram.

\begin{figure}[h]
\begin{center}
\includegraphics[width=8cm, height=7cm, keepaspectratio, scale=1]{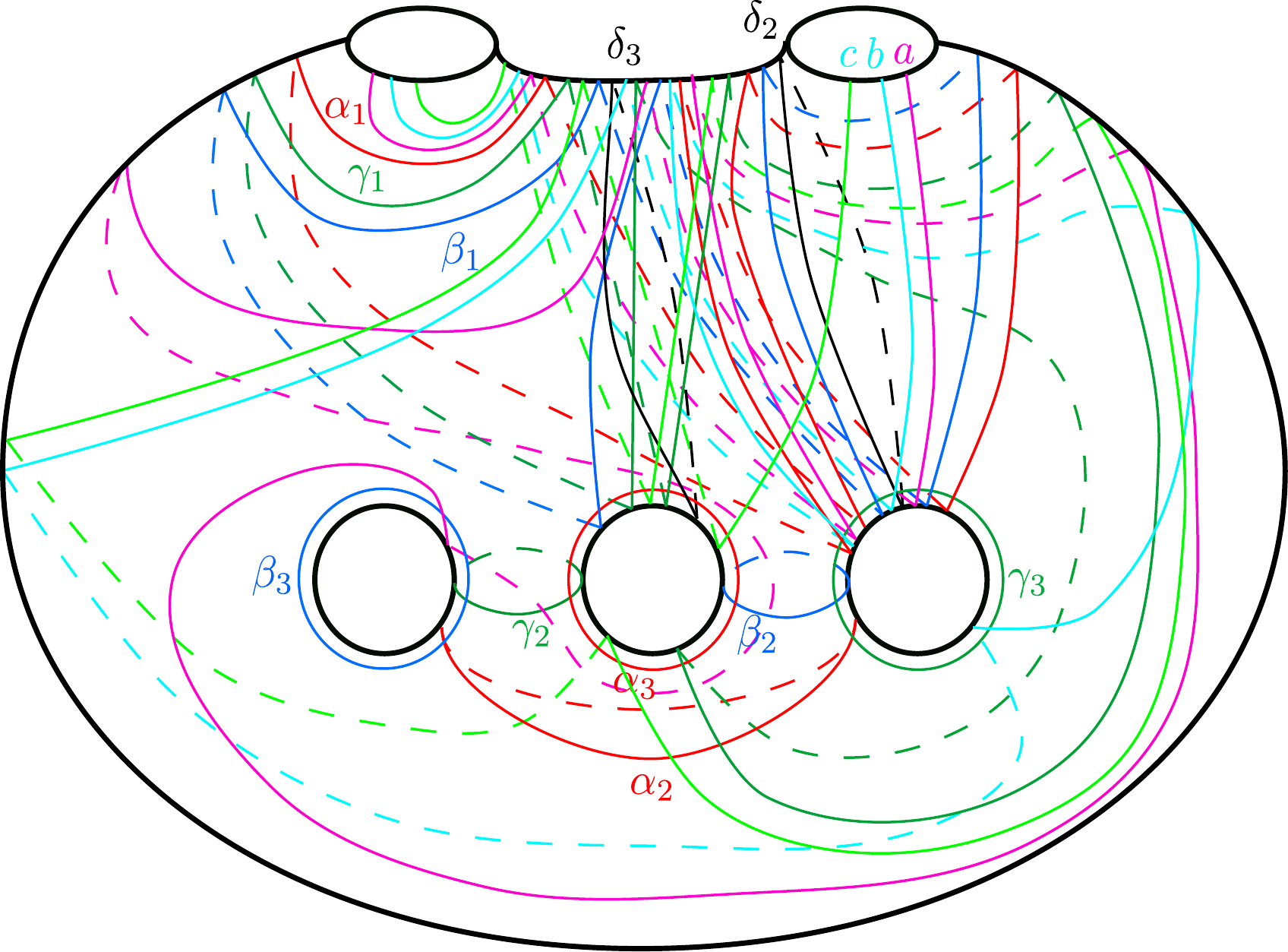}
\end{center}
\setlength{\captionmargin}{50pt}
\caption{An arced relative trisection diagram of $S^4-S(t(3,2)))$ and the reference curves $\delta_2$ and $\delta_3$.}
\label{fig:S(p,-2)_artd}
\end{figure}

\begin{example}
Figure \ref{fig:S(5,-2)_artd} shows the arced relative trisection diagram $\mathcal{D}_{S(t(5,2))}$ obtained from Lemma \ref{lem:(p,-2)_artd} (depicted for each family curve).
\end{example}

\begin{figure}[h]
\centering
\includegraphics[scale=0.22]{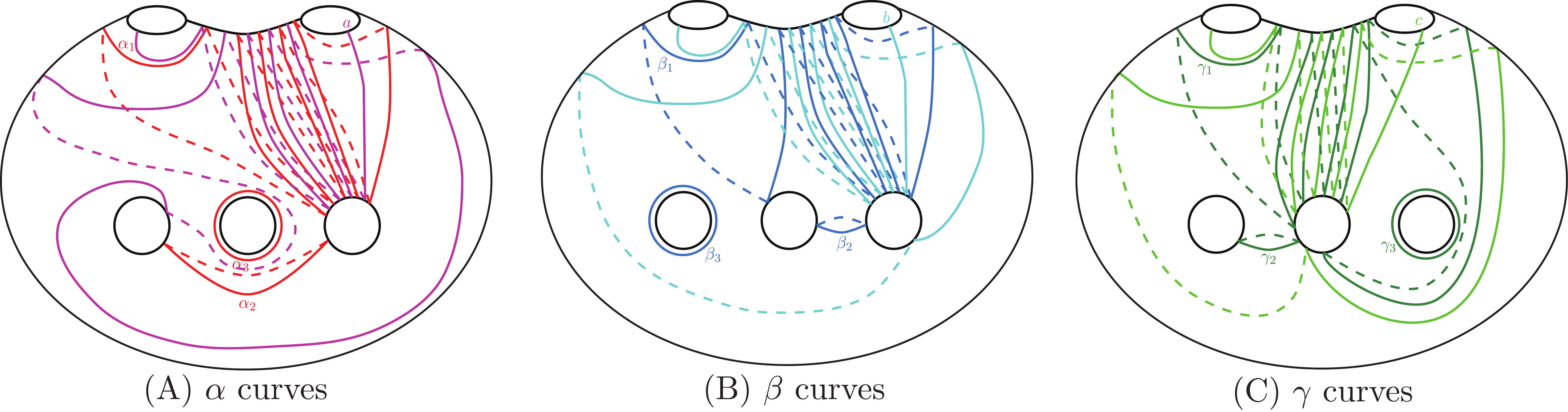}
\setlength{\captionmargin}{50pt}
\caption{The arced relative trisection diagram $\mathcal{D}_{S(t(5, 2))}$}
\label{fig:S(5,-2)_artd}
\end{figure}

The following lemma is obtained by combing Lemma \ref{lem:(p,-2)_artd} and Lemma 5.5 in \cite{GM1}.

\begin{lemma}\label{lem:main}
Let $(\Sigma, \alpha, \beta, \gamma)$ be a trisection diagram obtained by Gluck twisting on the spun trefoil depicted in Figure \ref{fig:main}, using the methods of \cite{GM1} and \cite{M}(Theorem \ref{spun} and \ref{1-bridge}). Then, for $n \ge 1$, $(\Sigma, \alpha', \beta', \gamma')$ is a trisection diagram obtained by Gluck twisting on $S(t(2n+1,2))$ using the same methods, where 
\begin{itemize}
\item $\alpha' = (t_{\delta_2}^{-(n-1)}(\alpha_1), \alpha_2, \alpha_3, t_{\delta_2}^{-(n-1)}(\alpha_4), \alpha_5, \alpha_6)$,
\item $\beta' = (t_{\delta_2}^{-(n-1)}(\beta_1), \beta_2, \beta_3, t_{\delta_2}^{-(n-1)}(\beta_4), \beta_5, \beta_6)$,
\item $\gamma' = (t_{\delta_3}^{-(n-1)}(\gamma_1), \gamma_2, \gamma_3, t_{\delta_3}^{-(n-1)}(\gamma_4), \gamma_5, \gamma_6)$,
\end{itemize}
and curves $\delta_2$ and $\delta_3$ are them depicted in Figure \ref{fig:main}.
\end{lemma}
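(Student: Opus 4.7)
The plan is to apply Theorem \ref{1-bridge} directly to the arced relative trisection diagram $\mathcal{D}_{S(t(2n+1,2))}$ supplied by Lemma \ref{lem:(p,-2)_artd}. By Theorem \ref{1-bridge}, gluing Figure \ref{fig:Gluck} onto $\mathcal{D}_{S(t(2n+1,2))}$ along the arcs $a'$, $b'$, $c'$ produces a trisection diagram of $\Sigma_{S(t(2n+1,2))}(S^4)$. Thus the content of the lemma is essentially bookkeeping: I must identify the resulting trisection diagram with the explicit $(\Sigma,\alpha',\beta',\gamma')$ in the statement, and verify that the Dehn twists in Lemma \ref{lem:(p,-2)_artd} propagate to the curves indexed by $4,5,6$ in the prescribed way.

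First I would treat the base case $n=1$. Here $\mathcal{D}_{S(t(3,2))}$ is exactly Figure \ref{fig:S(p,-2)_artd}, with arcs $a,b,c$ untwisted, and gluing Figure \ref{fig:Gluck} reproduces the trisection diagram of Figure \ref{fig:main} by definition. Labeling the three $\alpha$-curves coming from $\mathcal{D}_{S(t(3,2))}$ as $\alpha_1,\alpha_2,\alpha_3$ and the three $\alpha$-curves contributed by Figure \ref{fig:Gluck} as $\alpha_4,\alpha_5,\alpha_6$ (and similarly for $\beta,\gamma$) fixes the notation used in the statement.

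For general $n\ge 1$, I would then apply the same gluing procedure to $\mathcal{D}_{S(t(2n+1,2))}$ and compare the outcome with the $n=1$ case curve by curve. The $\alpha_1,\beta_1,\gamma_1$ curves from the relative trisection diagram are already twisted by $t_{\delta_2}^{-(n-1)}$ or $t_{\delta_3}^{-(n-1)}$ in Lemma \ref{lem:(p,-2)_artd}; the curves $\alpha_2,\alpha_3,\beta_2,\beta_3,\gamma_2,\gamma_3$ are disjoint from $\delta_2,\delta_3$ and hence unchanged. The new curves from Figure \ref{fig:Gluck} are attached along the arcs $a',b',c'$, which by Lemma \ref{lem:(p,-2)_artd} are the $t_{\delta_2}^{-(n-1)}$- or $t_{\delta_3}^{-(n-1)}$-image of $a,b,c$. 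Since in the gluing recipe of \cite{GM1} exactly one curve in each family is built using the attaching arc (the curve that crosses the handle added by the gluing region), that curve is dragged by the same power of the Dehn twist, producing $t_{\delta_2}^{-(n-1)}(\alpha_4)$, $t_{\delta_2}^{-(n-1)}(\beta_4)$, and $t_{\delta_3}^{-(n-1)}(\gamma_4)$. The remaining two curves $\alpha_5,\alpha_6$ (and similarly for $\beta,\gamma$) sit in a portion of Figure \ref{fig:Gluck} that is disjoint from the attaching arcs and from $\delta_2,\delta_3$, so they appear unchanged.

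The main obstacle is the last step: I must carefully inspect the gluing in Figure \ref{fig:Gluck} to confirm that exactly one curve in each family uses the attaching arc, and that the other two stay away from $\delta_2,\delta_3$ after the gluing. Once this is verified from the picture, noting that $\delta_2$ can be slid to take the place of $\delta_1$ for the $\alpha$-curves (as already used in Lemma \ref{lem:(p,-2)_rtd}), the identification of the resulting trisection diagram with $(\Sigma,\alpha',\beta',\gamma')$ follows, and Theorem \ref{1-bridge} finishes the proof.
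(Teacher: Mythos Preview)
Your proposal is correct and follows essentially the same approach as the paper: the paper simply states that the lemma is obtained by combining Lemma~\ref{lem:(p,-2)_artd} with Lemma~5.5 of \cite{GM1} (Theorem~\ref{1-bridge}), without further argument. Your write-up supplies the bookkeeping that the paper omits, tracking which glued curves inherit the Dehn twists from the arcs $a',b',c'$ and which remain unchanged.
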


Let $\mathcal{D} \cup \mathcal{D}_{S(t(2n+1,2))}$ denote the trisection diagram of $\Sigma_{S(t(2n+1,2))}(S^4)$ constructed in Lemma \ref{lem:main} (we write $\mathcal{D}$ for Figure \ref{fig:Gluck}).

\begin{figure}[h]
\begin{center}
\includegraphics[scale=0.6]{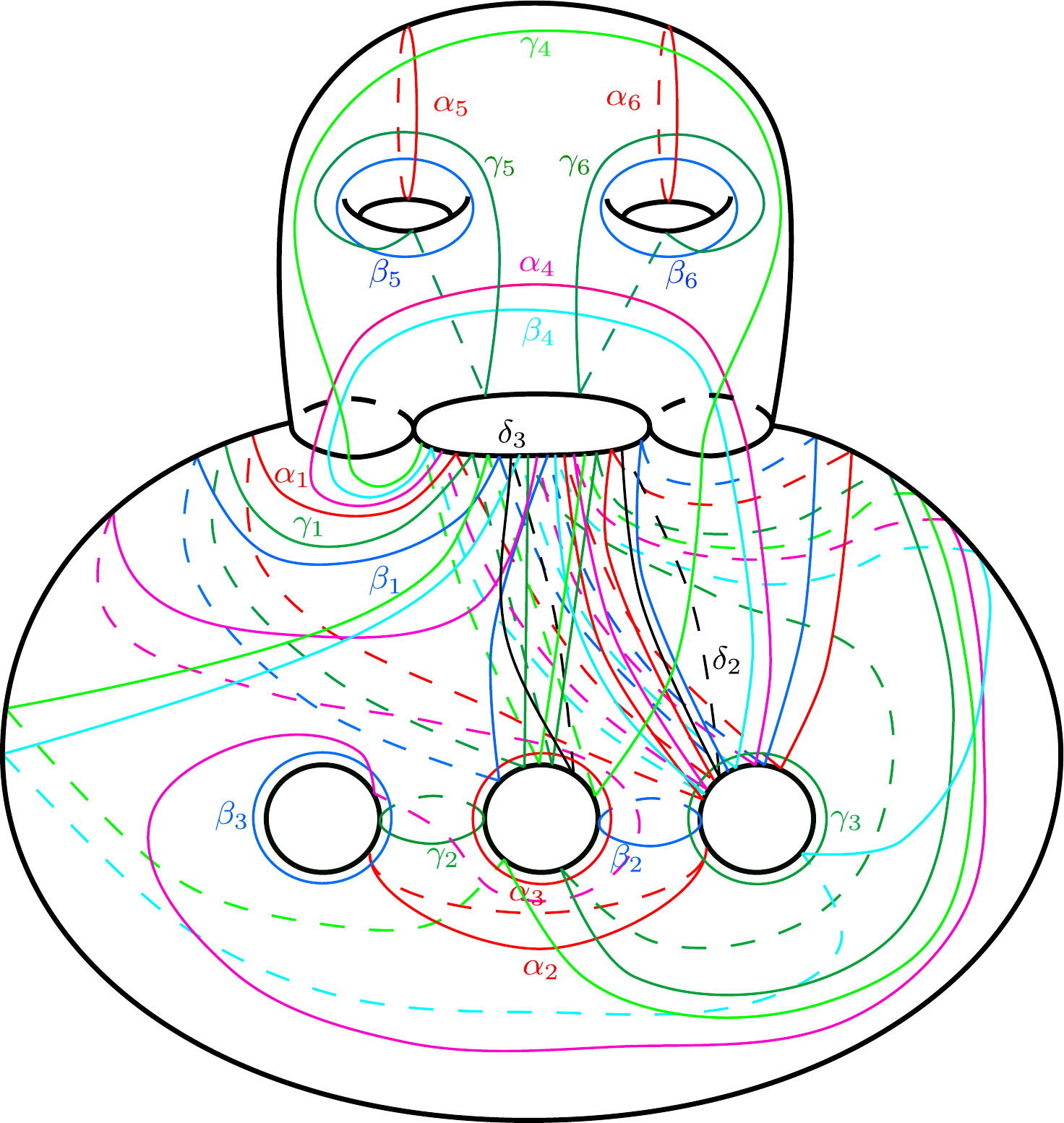}
\end{center}
\setlength{\captionmargin}{50pt}
\caption{A trisection diagram obtained by Gluck twisting on the spun trefoil and the reference curves $\delta_2$ and $\delta_3$.}
\label{fig:main}
\end{figure}

\begin{figure}[h]
\begin{center}
\begin{minipage}{0.3\hsize}
\includegraphics[width=8cm, height=3cm, keepaspectratio, scale=1]{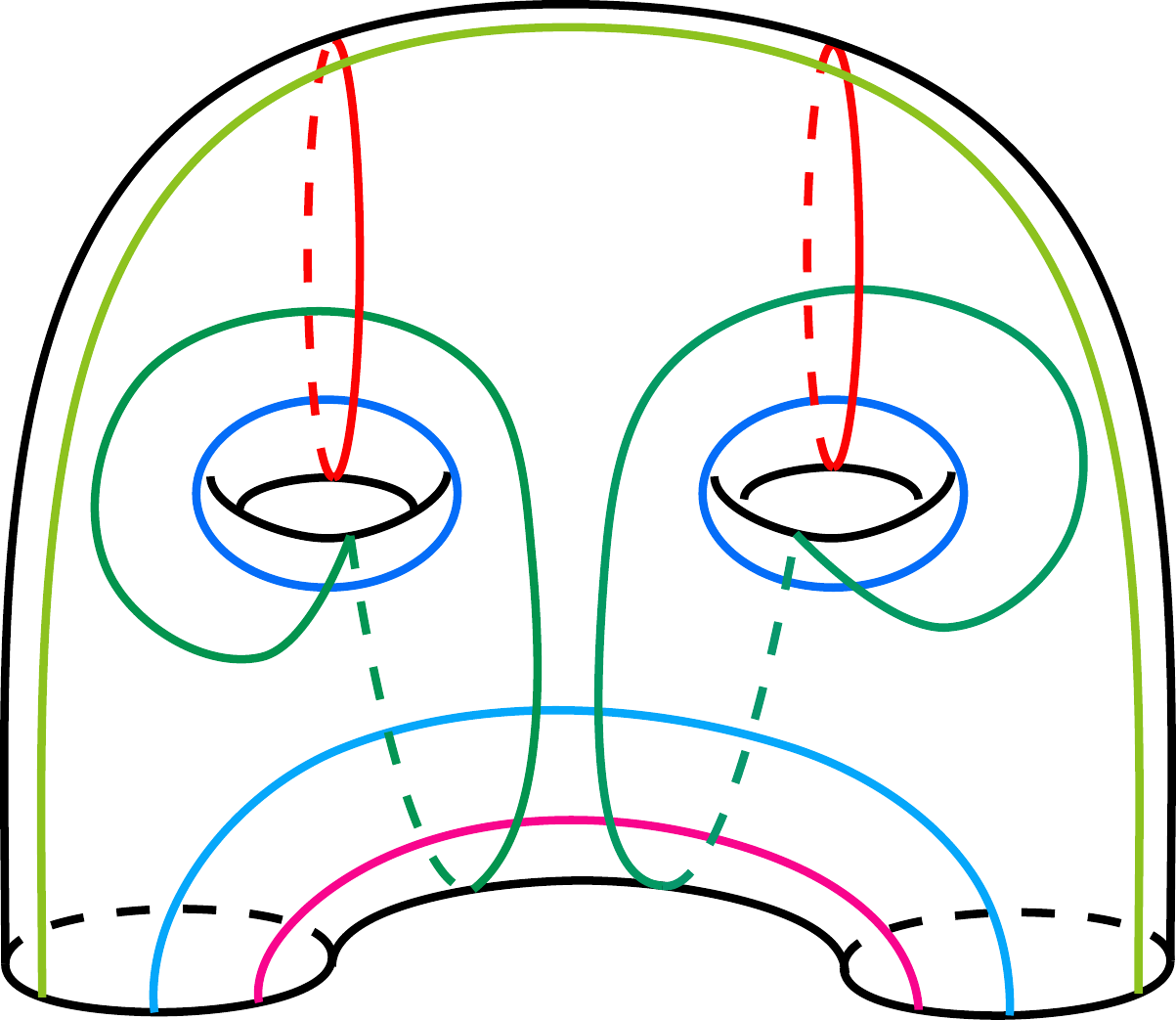}
\setlength{\captionmargin}{50pt}
\subcaption{}
\label{fig:Gluck}
\end{minipage} 
\begin{minipage}{0.5\hsize}
\includegraphics[width=8cm, height=4cm, keepaspectratio, scale=1]{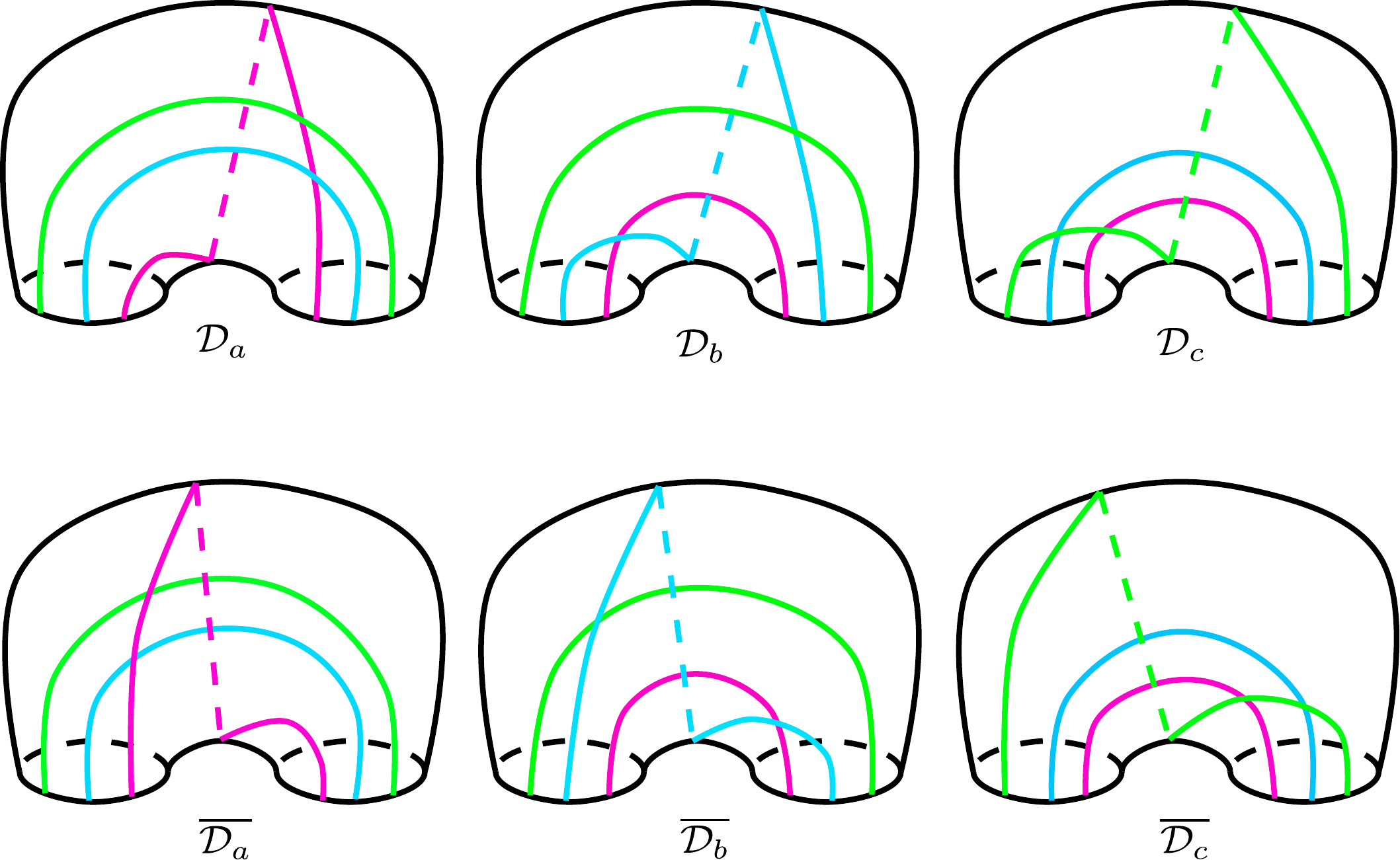}
\setlength{\captionmargin}{50pt}
\subcaption{}
\label{fig:D_x,overline{D_x}}
\end{minipage}
\end{center}
\setlength{\captionmargin}{50pt}
\caption{The figure (a) depicts an arced relative trisection diagram used in performing the Gluck twist, and the figure (b) is obtained by destabilizing the figure (a).}
\label{fig:aaa}
\end{figure}

\section{The case of the spun trefoil}
In this section, we prove Theorem \ref{thm1}. Since destabilizing $\mathcal{D}$ twice provides us with one of Figure \ref{fig:D_x,overline{D_x}}, we consider the case where $\mathcal{D}_{x} \cup \mathcal{D}_{S(t(3,2))}$ and $\overline{\mathcal{D}_{x}} \cup \mathcal{D}_{S(t(3,2))}$.

\begin{proof}[Proof of Theorem \ref{thm1}]
During the proof, we call a name of a simple closed curve obtained by a Dehn twist or a handle slide for a simple closed curve named $c$ also $c$. A trisection diagram is depicted for each family curve as in Figure \ref{fig:start}.

Note that the arced relative trisection diagram $\mathcal{D}_{S(t(3,2))}$ depicted in Figure \ref{fig:S(p,-2)_artd} has $\mathbb{Z}_3$-symmetry since it is 0-annular, namely, the $\gamma$ arcs and $\alpha$ arcs can be paralleled by handle sliding. Thus, it suffices to show that $\mathcal{D}_{b} \cup \mathcal{D}_{S(t(3,2))}$ and $\overline{\mathcal{D}_{b}} \cup \mathcal{D}_{S(t(3,2))}$ are standard. 

To begin with, we show the theorem in the case of $\mathcal{D}_{b}$. Note that in both cases, handle slides over some curves may be performed several times. Also note that in some figures we describe how to take arcs for complicated handle slides by arrows. In Figure \ref{fig:start}, for $\alpha$ curves, slide $\alpha_4$ over $\alpha_3$. Then, slide $\alpha_1$ and $\alpha_4$ over both $\alpha_2$ and $\alpha_3$. Finally, slide $\alpha_4$ over $\alpha_1$. For $\beta$ curves, slide $\beta_1$ over $\beta_3$. Then, slide $\beta_4$ over $\beta_1$. For $\gamma$ curves, slide $\gamma_4$ and $\gamma_1$ over $\gamma_2$. Then, slide $\gamma_4$ over $\gamma_1$. After these slides, the diagram is depicted in Figure \ref{fig:sec4_1}. In Figure \ref{fig:sec4_1}, $\alpha_i$ and $\gamma_i$ ($i=1,4$) are parallel to each other, and $\beta_4$ transversally intersects both $\alpha_4$ and $\gamma_4$ only once. Thus,  we can destabilize Figure \ref{fig:sec4_1}. From now, we deform Figure \ref{fig:sec4_1} to simplify $\beta_4$. 

In Figure \ref{fig:sec4_1}, for $\alpha$ curves, slide both $\alpha_1$ and $\alpha_4$ over $\alpha_2$, after that, slide $\alpha_2$ over $\alpha_3$. For $\gamma$ curves, slide both $\gamma_1$ and $\gamma_4$ over $\gamma_2$. After these slides, Figure \ref{fig:sec4_2} is obtained. Note that in Figure \ref{fig:sec4_2}, $t_{\delta}(\beta_4)=m$ (the braid relation), $i(\alpha_{j}, \delta)=i(\gamma_{j}, \delta)=0$ ($j=1,2,4$) and $i(\beta_{k},\delta)=0$ ($k=1,2,3$), where $i(\cdot, \cdot)$ represents the geometric intersection number.

In Figure \ref{fig:sec4_2}, perform $t_{\delta}$. Then, slide $\alpha_3$ and $\gamma_3$ over $\alpha_4$ and $\gamma_4$, respectively so that $i(\alpha_3,\beta_4)=i(\gamma_3,\beta_4)=0$. Moreover, slide $\alpha_3$ and $\gamma_3$ over $\alpha_2$ and $\gamma_2$, respectively. After that, perform $t_{\beta_2}^{-1}$. Finally, for $\alpha$ curves, slide $\alpha_2$ over $\alpha_3$, and $\alpha_4$ over $\alpha_2$. For $\gamma$ curves, slide $\gamma_4$ over $\gamma_2$. After these slides and twists, Figure \ref{fig:sec4_3} is obtained.

In Figure \ref{fig:sec4_3}, since $\alpha_4$ and $\gamma_4$ are parallel to each other, and $\beta_4$ transversally intersects $\alpha_4$ and $\gamma_4$ once, by destabilizing them, Figure \ref{fig:sec4_4} is obtained.

In Figure \ref{fig:sec4_4}, by sliding each curve properly, we have Figure \ref{fig:sec4_5}, the stabilization of the genus 0 trisection diagram of $S^4$.

\begin{figure}[h]
\centering
\includegraphics[scale=0.35]{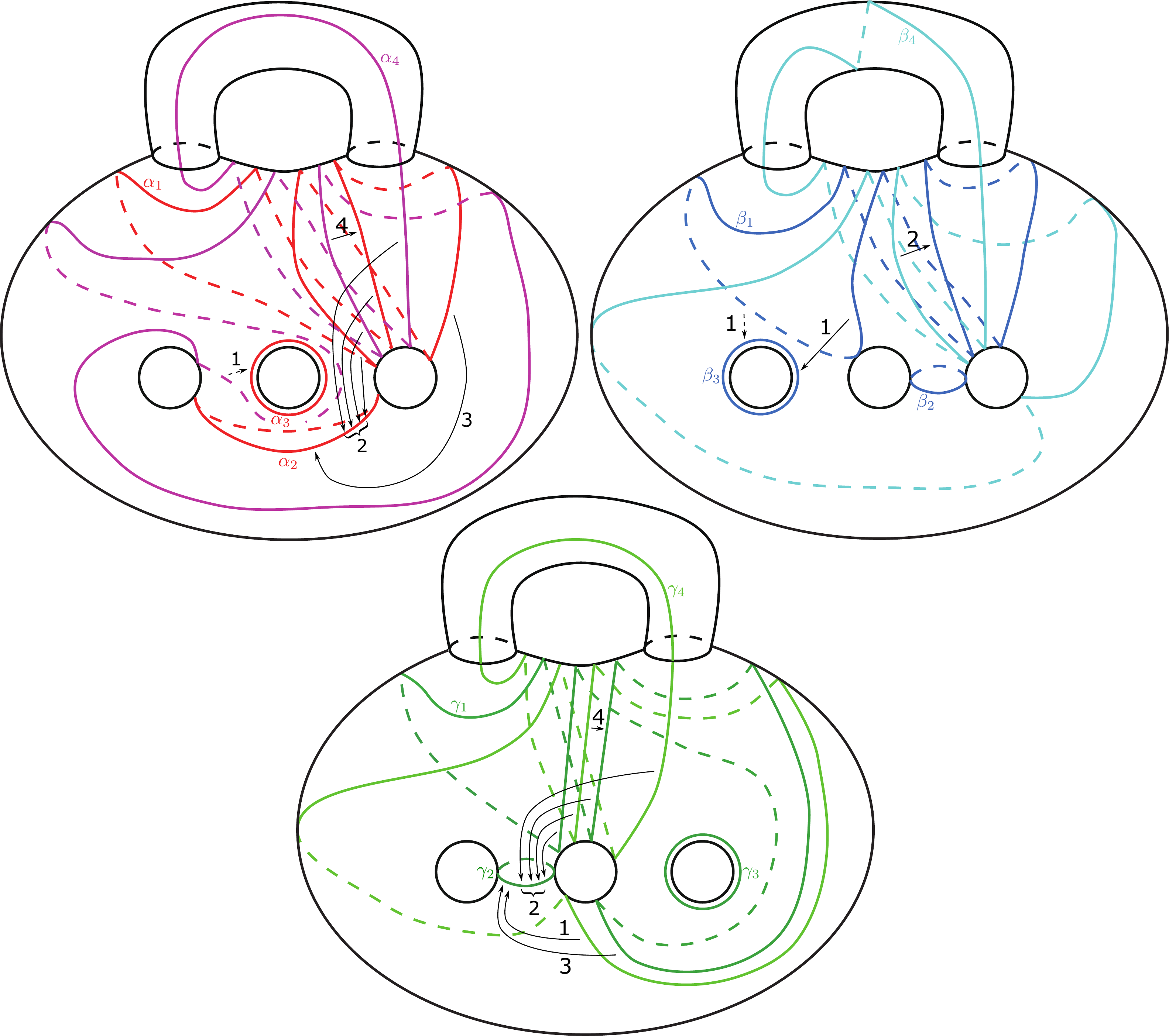}

\setlength{\captionmargin}{50pt}
\caption{The trisection diagram $\mathcal{D}_{b} \cup \mathcal{D}_{S(t(3,2))}$.}
\label{fig:start}
\end{figure}

\begin{figure}[h]
\centering
\includegraphics[scale=0.35]{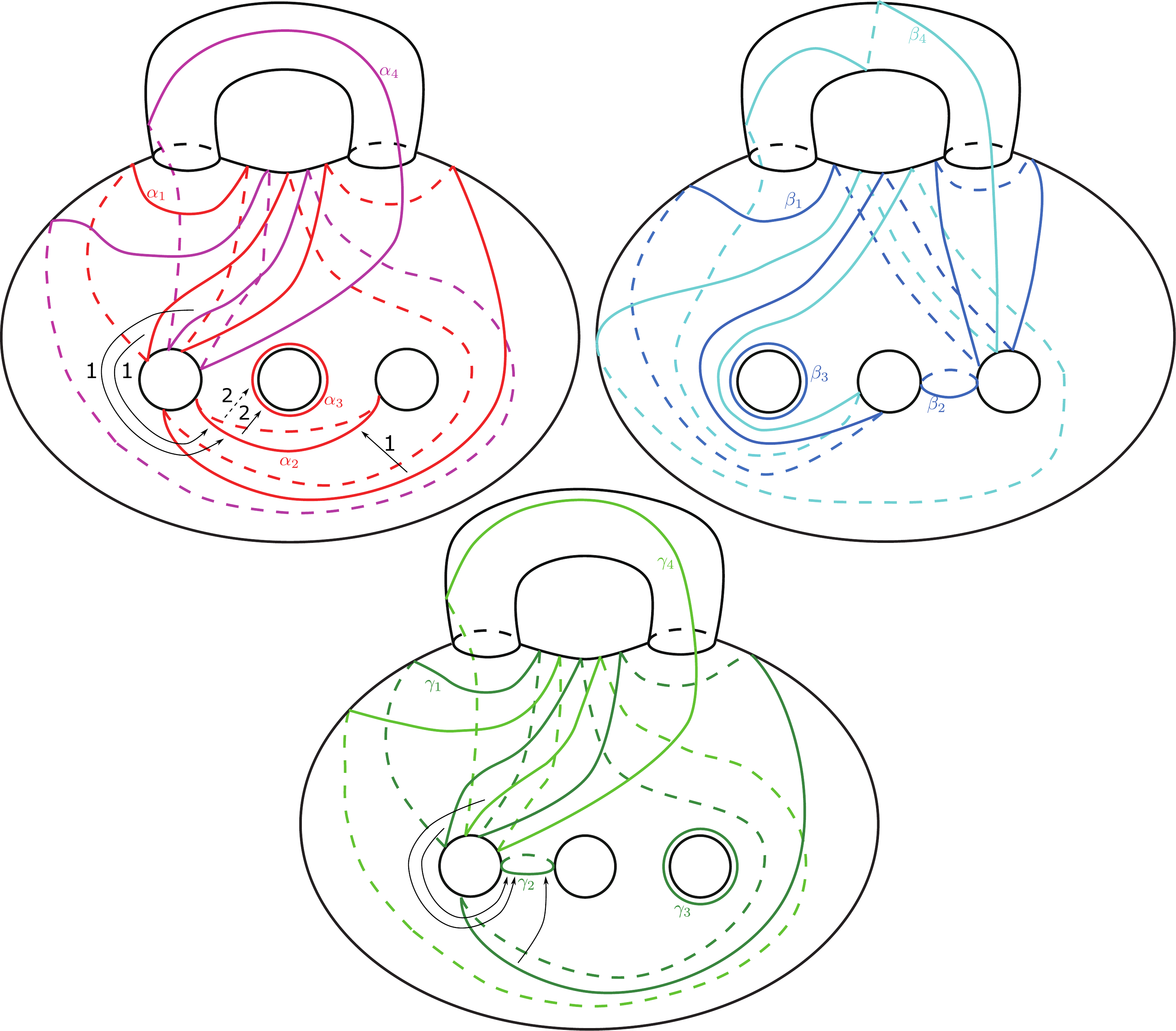}

\setlength{\captionmargin}{50pt}
\caption{A trisection diagram obtained from Figure \ref{fig:start} after some handle slides.}
\label{fig:sec4_1}
\end{figure}

\begin{figure}[h]
\centering
\includegraphics[scale=0.35]{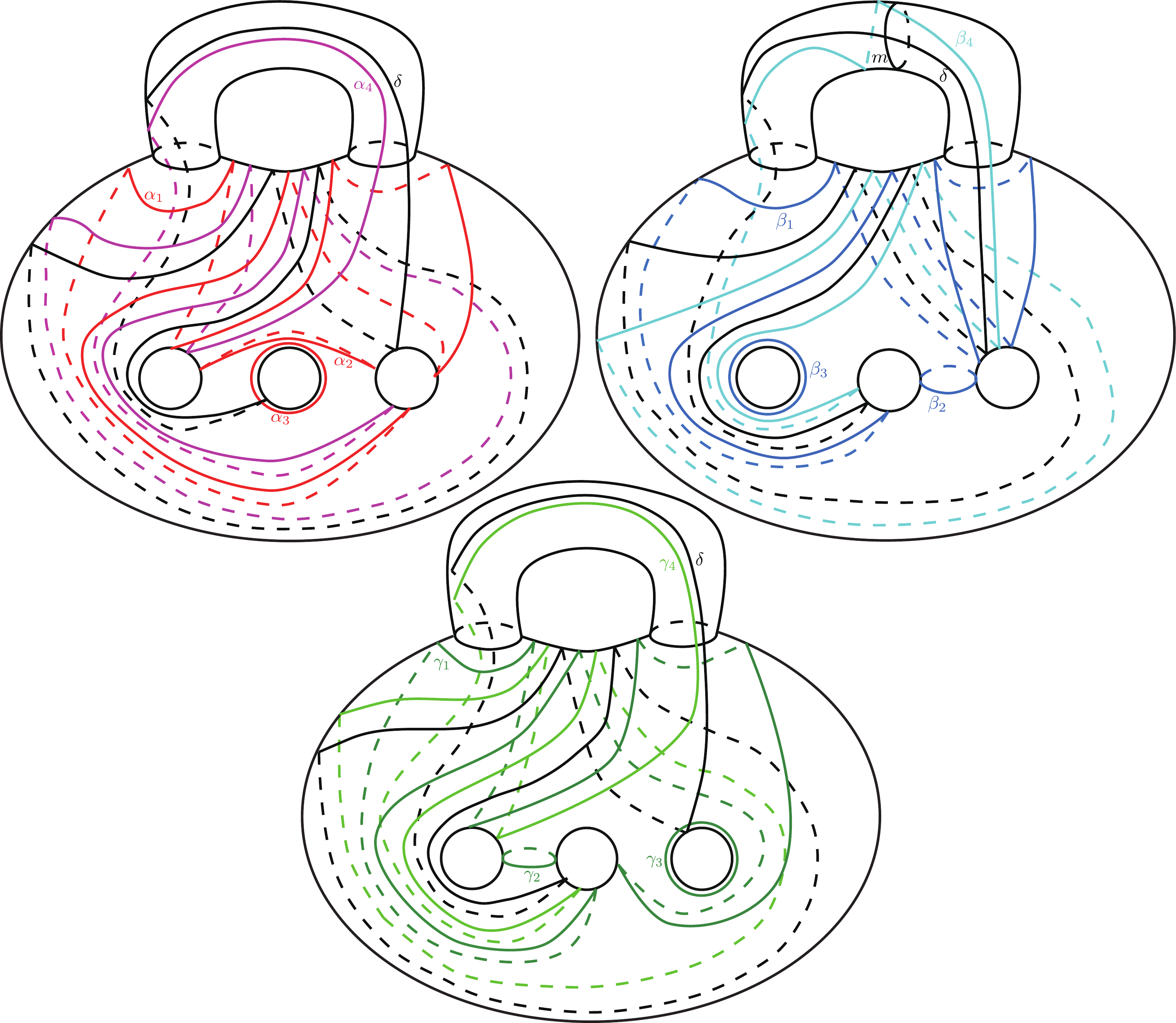}

\setlength{\captionmargin}{50pt}
\caption{A trisection diagram obtained from Figure \ref{fig:sec4_1} after some handle slides.}
\label{fig:sec4_2}
\end{figure}

\begin{figure}[h]
\centering
\includegraphics[scale=0.25]{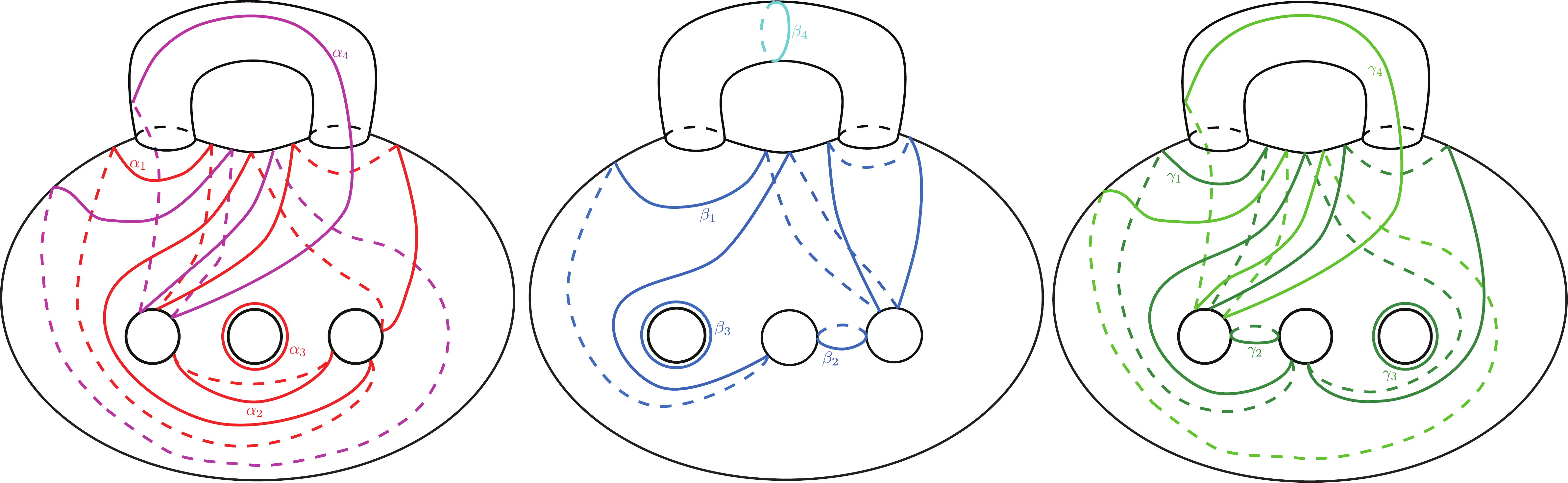}

\setlength{\captionmargin}{50pt}
\caption{Before the destabilization for $\alpha_4$, $\beta_4$ and $\gamma_4$.}
\label{fig:sec4_3}
\end{figure}

\begin{figure}[h]
\centering
\includegraphics[scale=0.25]{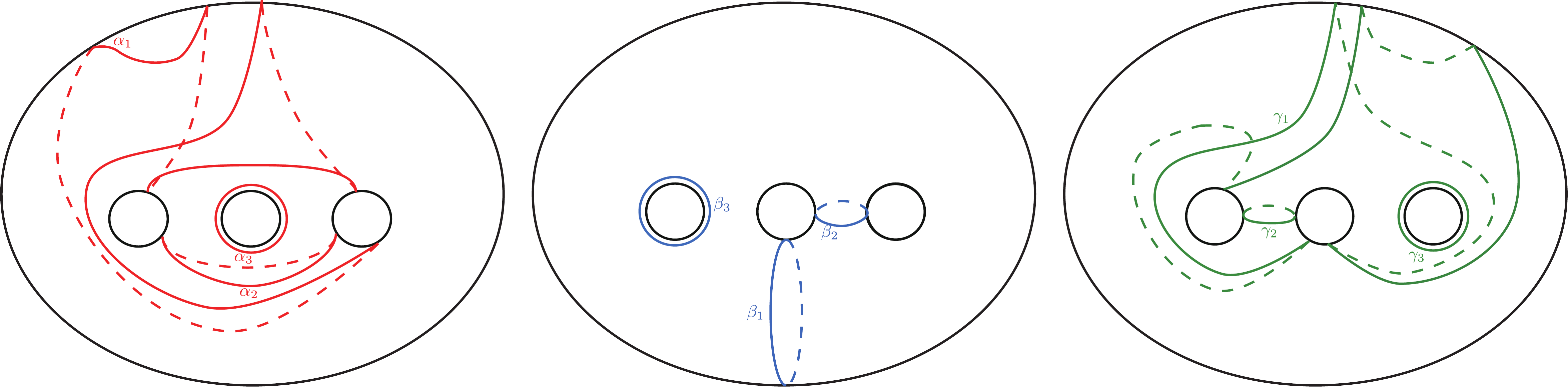}

\setlength{\captionmargin}{50pt}
\caption{After the destabilization.}
\label{fig:sec4_4}
\end{figure}

\begin{figure}[h]
\centering
\includegraphics[scale=0.25]{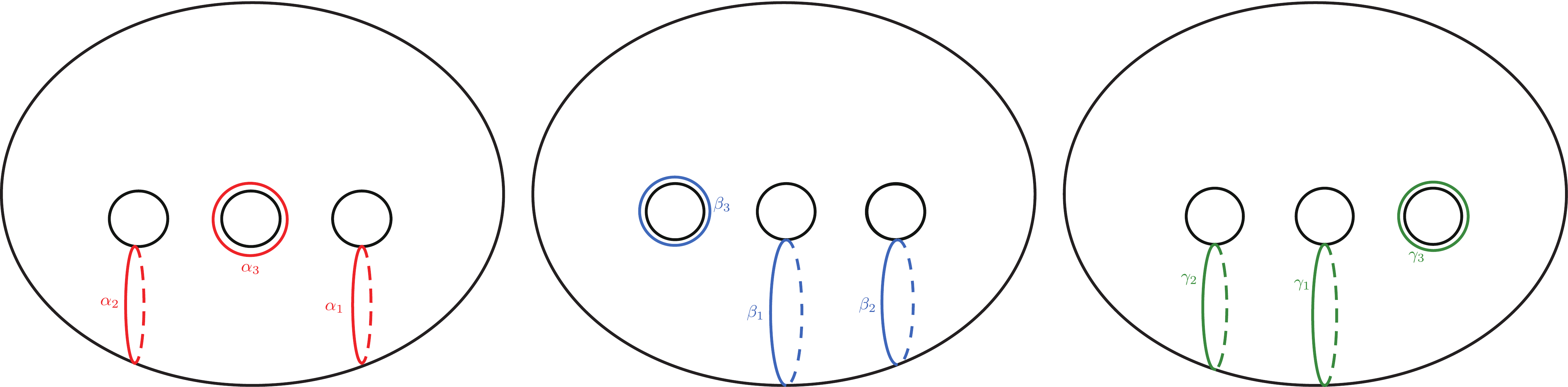}

\setlength{\captionmargin}{50pt}
\caption{The stabilization of the genus 0 trisection diagram of $S^4$.}
\label{fig:sec4_5}
\end{figure}

\begin{figure}[h]
\centering
\includegraphics[scale=0.35]{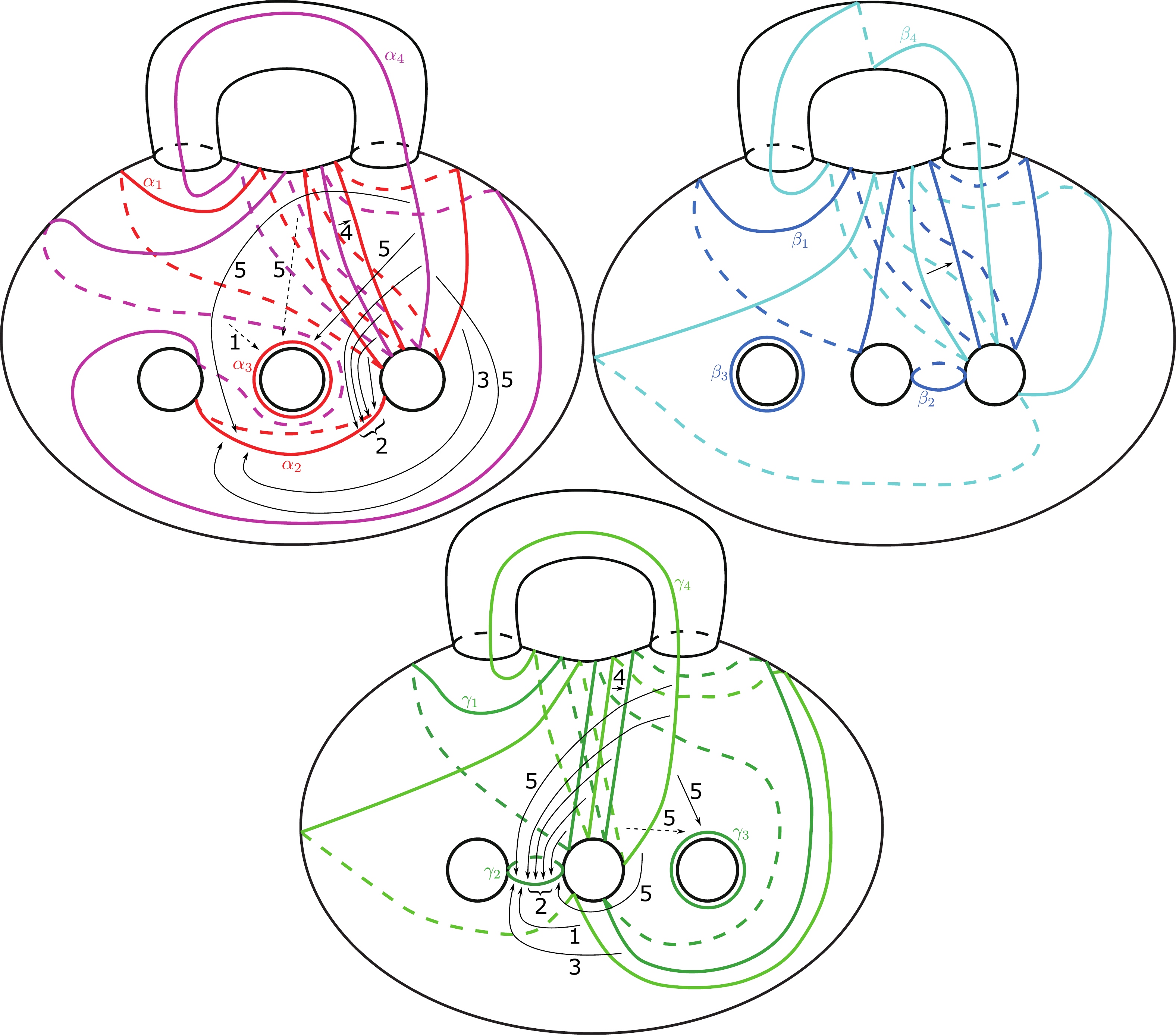}

\setlength{\captionmargin}{50pt}
\caption{The trisection diagram $\overline{\mathcal{D}_{b}} \cup \mathcal{D}_{S(t(3,2))}$.}
\label{fig:Gluck_overline{D_b}}
\end{figure}

\begin{figure}[h]
\centering
\includegraphics[scale=0.35]{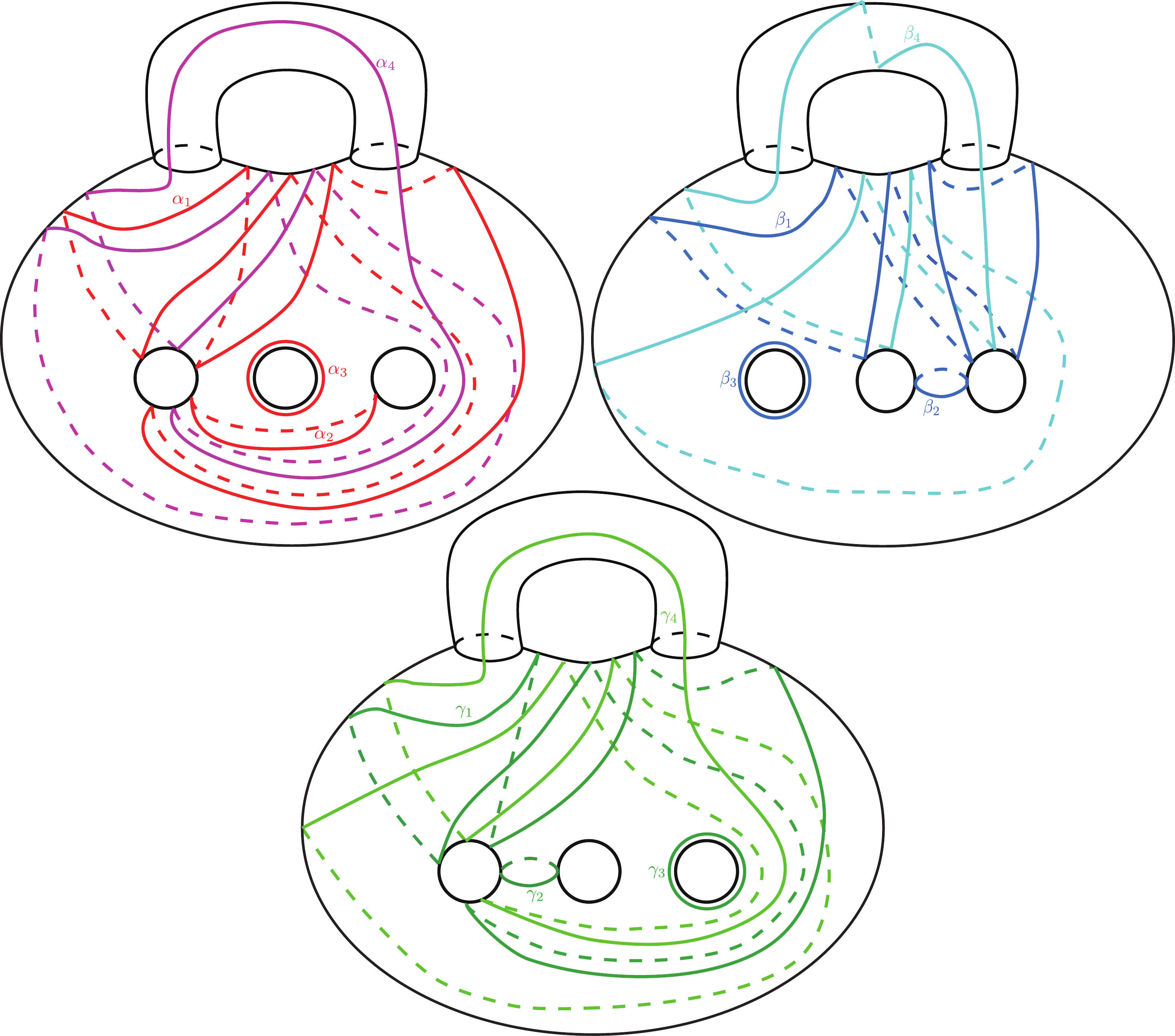}

\setlength{\captionmargin}{50pt}
\caption{A trisection diagram obtained from Figure \ref{fig:Gluck_overline{D_b}} by some handle slides.}
\label{fig:sec4_6}
\end{figure}

\begin{figure}[h]
\centering
\includegraphics[scale=0.35]{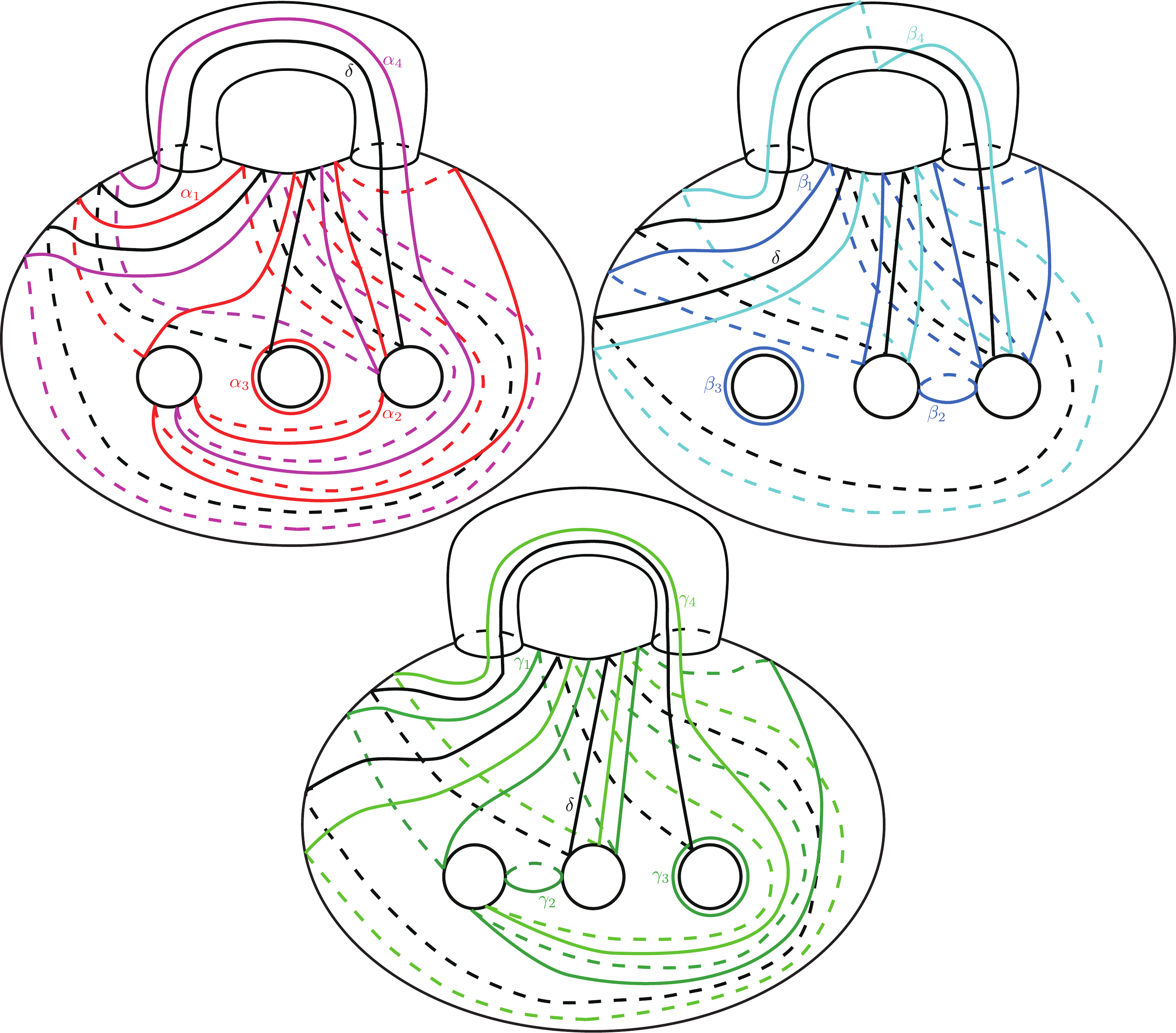}

\setlength{\captionmargin}{50pt}
\caption{A trisection diagram obtained from Figure \ref{fig:sec4_6} by some handle slides.}
\label{fig:sec4_7}
\end{figure}

\begin{figure}[h]
\centering
\includegraphics[scale=0.25]{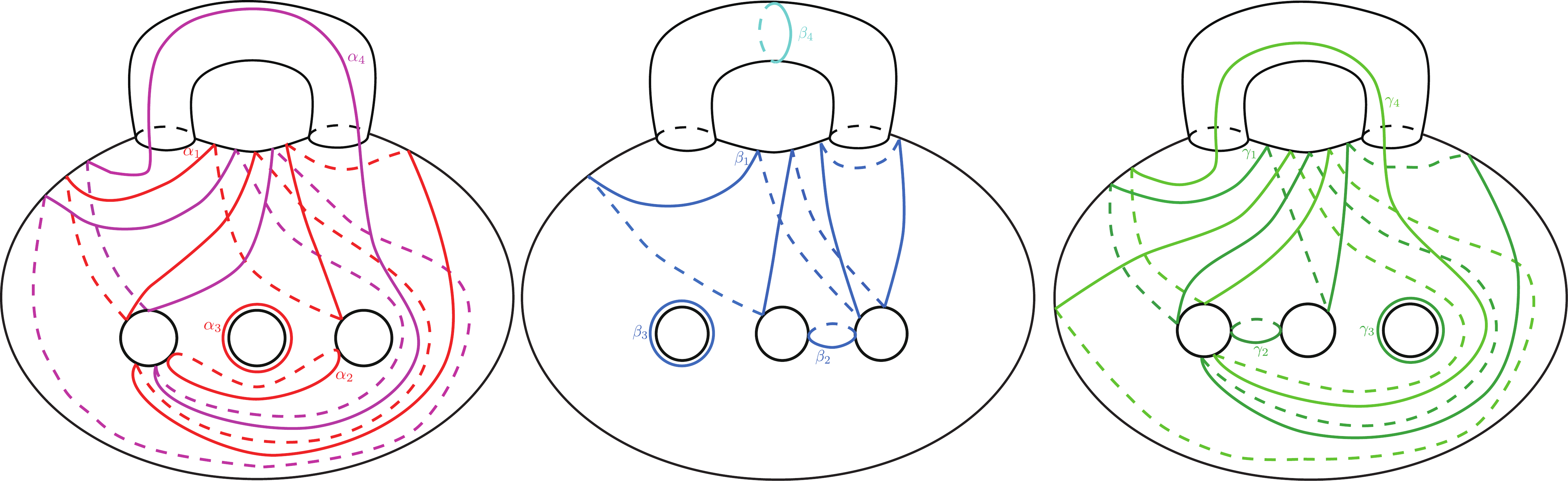}

\setlength{\captionmargin}{50pt}
\caption{Before the destabilization for $\alpha_4$, $\beta_4$ and $\gamma_4$.}
\label{fig:sec4_8}
\end{figure}

\begin{figure}[h]
\centering
\includegraphics[scale=0.25]{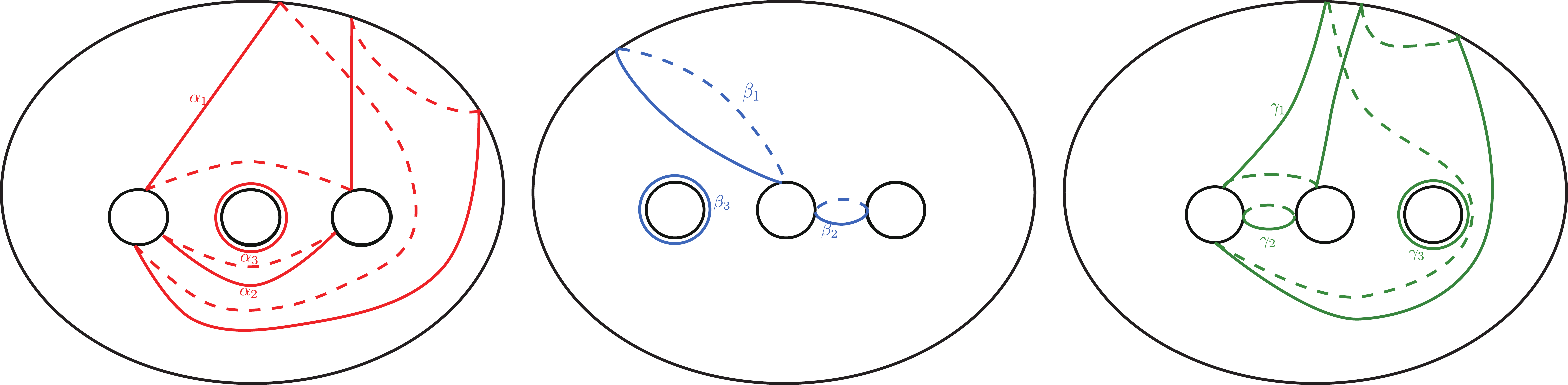}

\setlength{\captionmargin}{50pt}
\caption{After the destabilization.}
\label{fig:sec4_9}
\end{figure}

Next, we show the case of $\overline{\mathcal{D}_{b}}$. In Figure \ref{fig:Gluck_overline{D_b}}, for $\alpha$ curves, slide $\alpha_4$ over $\alpha_3$. Then, slide $\alpha_1$ and $\alpha_4$ over both $\alpha_2$ and $\alpha_3$. After that, slide $\alpha_1$ over $\alpha_2$. 
Then, slide $\alpha_4$ over $\alpha_1$. After that, slide $\alpha_4$ over both $\alpha_2$ and $\alpha_3$. 
For $\beta$ curves, slide $\beta_4$ over $\beta_1$. For $\gamma$ curves, slide $\gamma_4$ over $\gamma_2$. Then, slide $\gamma_1$ and $\gamma_4$ over $\gamma_2$. After that, slide $\gamma_4$ over $\gamma_1$. Then, slide $\gamma_4$ over both $\gamma_2$ and $\gamma_3$.  After these slides, the diagram is depicted in Figure \ref{fig:sec4_6}. In Figure \ref{fig:sec4_6}, $\alpha_i$ and $\gamma_i$ ($i=1,4$) are parallel to each other, and $\beta_4$ transversally intersects $\alpha_4$ and $\gamma_4$ only once. Thus, we can destabilize Figure \ref{fig:sec4_6}. From now, we deform Figure \ref{fig:sec4_6} to simplify $\beta_4$.

In Figure \ref{fig:sec4_6}, slide $\alpha_1$ and $\alpha_4$ over both $\alpha_2$ and $\alpha_3$, and $\gamma_1$ and $\gamma_4$ over $\gamma_2$. Then, Figure \ref{fig:sec4_7} is obtained. Note that in Figure \ref{fig:sec4_7}, $t_{\delta}^{-1}(\beta_4)=m$ (the braid relation), $i(\alpha_{j}, \delta)=i(\gamma_{j}, \delta)=0$ ($j=1,2,4$) and $i(\beta_{k},\delta)=0$ ($k=1,2,3$), where $m$ is the curve depicted in Figure \ref{fig:sec4_2}.

In Figure \ref{fig:sec4_7}, perform $t_{\delta}^{-1}$. Then, slide $\alpha_3$ and $\gamma_3$ over $\alpha_4$ and $\gamma_4$, respectively so that $i(\alpha_3,\beta_4)=i(\gamma_3,\beta_4)=0$. Moreover, slide $\alpha_3$ and $\gamma_3$ over $\alpha_2$ and $\gamma_2$, respectively. After that, perform $t_{\beta_2}$. Finally, for $\alpha$ curves, slide $\alpha_4$ over both $\alpha_2$ and $\alpha_3$. For $\gamma$ curves, slide $\gamma_4$ over $\gamma_2$. After these slides and twists, Figure \ref{fig:sec4_8} is obtained.

In Figure \ref{fig:sec4_8}, since $\alpha_4$ and $\gamma_4$ are parallel to each other, and $\beta_4$ transversally intersects $\alpha_4$ and $\gamma_4$ only  once, by destabilizing them, Figure \ref{fig:sec4_9} is obtained.

In Figure \ref{fig:sec4_9}, by sliding each curve properly, we have Figure \ref{fig:sec4_5}, the stabilization of the genus 0 trisection diagram of $S^4$.

\end{proof}

\section{Homological calculations}
In this section, we compute homological information of trisection diagrams of $S^4$ obtained by Gluck twisting along the spun $(2n+1,2)$-torus knots. 
This section is organized as follows: First, we investigate how the homology class of curves in a given trisection diagram changes with handle slides and Dehn twists.
After that, we define a homologically standard trisection diagram of $S^4$ and show that all trisection diagrams in section 3 are homologically standard trisection diagrams of $S^4$. 

The following lemma is a change of a homology class of curves on a surface after handle sliding.
\begin{lemma}
	Let $C_1$ and $C_2$ be non-isotopic essential simple closed curves on an orientable closed surface $\Sigma_g$. 
	Suppose that $C_3$ is an essential simple closed curve that is obtained by handle sliding $C_1$ over $C_2$.
	Then, 
	\[
			[C_3]=[C_1]\pm[C_2]
	\]
	in $H_1(\Sigma_g)$.
\end{lemma}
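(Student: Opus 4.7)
The plan is to realize the handle slide as a band sum and then read off the homological relation from the topology of the neighborhood in which the band sum takes place. First I would fix a disjoint parallel copy $C_2'$ of $C_2$ together with an embedded arc $\eta$ in $\Sigma_g$ whose interior is disjoint from $C_1\cup C_2'$ and whose endpoints lie one on $C_1$ and one on $C_2'$; by the definition of a handle slide, $C_3$ is isotopic to the band sum of $C_1$ and $C_2'$ along $\eta$. Let $N$ be a closed regular neighborhood of $C_1\cup\eta\cup C_2'$ in $\Sigma_g$. Since $C_1$ and $C_2'$ are disjoint essential simple closed curves joined by a single arc, $N$ is a pair of pants, and its three boundary components are isotopic in $\Sigma_g$ to $C_1$, $C_2'$, and $C_3$ respectively.

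Next, I would use the standard fact that the three oriented boundary components of a pair of pants (taken with the boundary orientation induced by an orientation of $N$) sum to zero in $H_1(N)$. Choosing orientations of $C_1$, $C_2'$, $C_3$ that may or may not agree with these boundary orientations, and pushing the relation forward under the inclusion $N\hookrightarrow\Sigma_g$, gives an identity of the form
\[
\varepsilon_1[C_1]+\varepsilon_2[C_2']+\varepsilon_3[C_3]=0\quad\text{in } H_1(\Sigma_g),
\]
with $\varepsilon_i\in\{+1,-1\}$. Because $C_2'$ is isotopic to $C_2$ we have $[C_2']=[C_2]$, so after multiplying through by $\varepsilon_3$ (equivalently, possibly reversing the chosen orientation of $C_3$) we obtain $[C_3]=\varepsilon_1\varepsilon_3[C_1]+\varepsilon_2\varepsilon_3[C_2]$. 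Absorbing the sign $\varepsilon_1\varepsilon_3$ into the orientation of $C_1$ reduces this to $[C_3]=[C_1]\pm[C_2]$, as required.

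The only conceptual point that needs care is verifying that the neighborhood $N$ really is a pair of pants, i.e.\ that its three boundary components are genuinely isotopic to $C_1$, $C_2'$, and a curve representing the band sum $C_3$; this is immediate from a local picture of the band attached along $\eta$. The hypothesis that $C_1$ and $C_2$ are non-isotopic (and that $C_3$ is essential) is used only to ensure that the handle slide is a nontrivial move producing an honest simple closed curve, so that the resulting $C_3$ is not inessentially bounding in $N$; the homological identity itself then follows uniformly from the pair-of-pants relation.
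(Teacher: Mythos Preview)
Your argument is correct. Both your proof and the paper's start from the same geometric picture: the regular neighborhood $N$ of $C_1$, the connecting arc, and (a push-off of) $C_2$ is a pair of pants whose third boundary component is $C_3$. The difference lies only in how the homological identity is extracted. You use directly that the oriented boundary of $N$ is null-homologous and push the relation $\pm[C_1]\pm[C_2]\pm[C_3]=0$ forward into $H_1(\Sigma_g)$. The paper instead fixes a symplectic basis $\{l_1,\dots,l_{2g}\}$ and checks $\langle C_3,l_i\rangle=\langle C_1,l_i\rangle\pm\langle C_2,l_i\rangle$ for every $i$, observing that the two strands of $C_3$ running parallel to the connecting arc meet each $l_i$ in a pair of points of opposite sign and hence cancel. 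Your route avoids choosing a basis and is slightly more conceptual; the paper's route is more hands-on and makes the cancellation along the arc explicit.
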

\begin{proof}
	Let $a$ be an arc in $\Sigma_g$ which connects $C_1$ and $C_2$ and $\{l_1, \ldots ,l_{2g}\}$ a symplectic basis of $H_1(\Sigma_g)$.
	Then $C_3$ is one of the boundaries of $N(C_1\cup a\cup C_2)$ that is not isotopic to both $C_1$ and $C_2$.
	The intersection of $a$ and $l_i$ corresponds to an intersection of $C_3$ and $l_i$.
	One of the signs of two points corresponding to this intersection is positive, the other is negative. 
	Hence the algebraic intersection number of $C_3$ and $l_i$ is $\langle C_1, l_i\rangle+\langle C_2, l_i \rangle$, where $\langle \cdot, \cdot \rangle$ is a algebraic intersection number.
	Therefore the statement holds.
\end{proof}

\begin{definition}
	Let $\Sigma_g$ be an orientable closed surface and $[C_1]$ and $[C_2]$ non-trivial elements in  $H_1(\Sigma_g)$, and $[C_3]=[C_1]\pm[C_2]$.
	We call the operation that replaces $[C_1]$ by $[C_3]$ {\it a homological handle slide} $[C_1]$ over $[C_2]$.
\end{definition}

\begin{lemma}
	Let $C_1$ and $C_2$ be essential simple closed curves on an orientable closed surface $\Sigma_g$. 
	Suppose that $C_3$ is an essential simple closed curve that is obtained from $C_1$ by Dehn twisting along $C_2$.
	Then, 
	\[
			[C_3]=[C_1]\pm\langle C_1, C_2 \rangle [C_2]
	\]
	in $H_1(\Sigma_g)$, where  $\langle C_1, C_2 \rangle$ is the algebraic intersection number of $C_1$ and $C_2$ for certain basis of $H_1(\Sigma_g)$.
\end{lemma}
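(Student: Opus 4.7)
The plan is to localize the effect of the Dehn twist to an annular neighborhood and reduce the claim to the previous handle-slide lemma. Fix orientations on $C_1$ and $C_2$ and choose an annular neighborhood $A \cong C_2 \times [-1,1]$ of $C_2$ on which the Dehn twist $t_{C_2}$ is supported. Since $t_{C_2}$ is the identity outside $A$, the curves $C_3 = t_{C_2}(C_1)$ and $C_1$ coincide away from $A$, and inside $A$ each of the $i(C_1, C_2)$ transverse arcs of $C_1 \cap A$ is replaced by an arc with the same endpoints that wraps once around the $S^1$-direction of $A$.

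Next, I would compute $[C_3] - [C_1]$ as a cycle supported in $A$. For each intersection point $p \in C_1 \cap C_2$, the corresponding replaced arc together with the original arc forms a closed loop in $A$ homologous to $\mathrm{sgn}(p)\,[C_2]$, where the sign is determined by comparing the orientation at $p$ with the sense of the Dehn twist (right-handed for $t_{C_2}$). Summing over all intersection points gives $[C_3] - [C_1] = \sum_{p \in C_1 \cap C_2} \mathrm{sgn}(p) [C_2] = \langle C_1, C_2\rangle [C_2]$, up to the global $\pm$ that depends on the orientation convention for the algebraic intersection form on $\Sigma_g$.

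Alternatively, and in keeping with the style of the preceding lemma, one can view the Dehn twist as a sequence of $i(C_1, C_2)$ handle slides of $C_1$ over parallel copies of $C_2$. By the previous lemma each slide alters the homology class by $\pm [C_2]$. The key additional observation is that these signs are not arbitrary but are forced by the Dehn twist to equal $\mathrm{sgn}(p)$ at the corresponding intersection; after cancellations one is left with exactly $\pm \langle C_1, C_2 \rangle [C_2]$.

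The main obstacle is precisely this sign bookkeeping: the handle-slide lemma was stated with an unspecified $\pm$ at each slide, and one must verify that, for a Dehn twist, the signs conspire to produce the algebraic (not geometric) intersection number. I expect this to be the point requiring the most care, and I would address it either by explicit coordinates on $A$ (checking the model case of two transverse arcs on a cylinder) or by invoking the standard fact that $t_{C_2}$ acts on $H_1(\Sigma_g;\mathbb{Z})$ as the symplectic transvection $x \mapsto x + \langle x, C_2 \rangle [C_2]$.
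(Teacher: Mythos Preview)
Your argument is correct and follows essentially the same approach as the paper: localize the Dehn twist to a neighborhood of $C_2$, analyze the contribution at each intersection point of $C_1\cap C_2$ (each contributing $\pm[C_2]$ with sign given by the local intersection sign), and sum to obtain $\pm\langle C_1,C_2\rangle[C_2]$. Your write-up is more careful and detailed than the paper's (in particular your explicit computation of $[C_3]-[C_1]$ as a cycle supported in the annulus, and your remark on the symplectic transvection), but the underlying idea is the same.
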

\begin{proof}
	If $C_1$ intersects $C_2$ exactly one point, $[C_3]=[C_1]\pm [C_2]$ after the Dehn twists (the sign depends on both Dehn twist is the right or left hand, and $\langle C_1, C_2\rangle$ is $+1$ or $-1$). If $\langle C_1, C_2 \rangle$ is not $\pm 1$, we shall consider Dehn twist along $C_2$ for each intersection $C_1\cap C_2$. The homology class of obtained curve is change $\pm 1$ for the intersection with positive intersection and the other is $\mp 1$. Hence homology class of obtained curve is $[C_1]\pm\langle C_1, C_2 \rangle [C_2]$.
\end{proof}

\begin{definition}
	Let $\Sigma_g$ be an orientable closed surface and $[C_1]$ and $[C_2]$ non-trivial elements in  $H_1(\Sigma_g)$, and $[C_3]=[C_1]\pm\langle C_1, C_2 \rangle [C_2]$.
	We say the operation that replaces $[C_1]$ by $[C_3]$ {\it a homological Dehn twist} along $[C_2]$.
\end{definition}

\begin{definition}
	Let $(\Sigma; \alpha, \beta, \gamma)$ be a trisection diagram of the 4-sphere and $\alpha=\{\alpha_1,\ldots, \alpha_g\}$, $\beta=\{\beta_1, \ldots , \beta_g\}$ and $\gamma=\{\gamma_1, \ldots , \gamma_g\}$.
	We say $(\Sigma; \alpha, \beta, \gamma)$ is {\it homologically standard} if the following holds after performing homological Dehn twists and handle slides finitely many times, and replacing some indices:
	
	\begin{itemize}
		\item $\langle [\alpha_i], [\beta_i] \rangle=\langle [\alpha_i], [\gamma_i] \rangle=\pm 1$, $\langle [\alpha_i], [\beta_j] \rangle=\langle [\gamma_i], [\beta_j] \rangle=0$ and $[\beta_i]=[\gamma_i]$ in $H_1(\Sigma)$ for $i\neq j$.
		\item $\langle [\beta_i], [\gamma_i] \rangle=\langle [\beta_i], [\alpha_i] \rangle=\pm 1$, $\langle [\beta_i], [\gamma_j] \rangle=\langle [\beta_i], [\alpha_j] \rangle=0$ and $[\gamma_i]=[\alpha_i]$ in $H_1(\Sigma)$ for $i\neq j$.
		\item $\langle [\gamma_i], [\alpha_i] \rangle=\langle [\gamma_i], [\beta_i] \rangle=\pm 1$, $\langle [\gamma_i], [\alpha_j] \rangle=\langle [\gamma_i], [\beta_j] \rangle=0$ and $[\alpha_i]=[\beta_i]$ in $H_1(\Sigma)$ for $i\neq j$.
	\end{itemize}
\end{definition}
Note that if a trisection diagram is standard, then it is also homologically standard, and this property serves as a kind of invariant for measuring standardness.
\begin{proof}[Proof of Theorem \ref{thm2}]
	Let $\{C_1,  \ldots, C_{2g}\}$ be a basis of $\Sigma_g$.
	We define the matrix $T$ whose row vectors are $\alpha_i$, $\beta_i$ and $\gamma_i$ in $H_1(\Sigma_g)$ with respect to a basis $\{C_1,  \ldots, C_{2g}\}$ for $i=1, 2, 3, 4$.
	\[
			T :=\begin{pmatrix}
					\alpha_1\\
					\alpha_2\\
					\alpha_3\\
					\alpha_4\\
					\beta_1\\
					\beta_2\\
					\beta_3\\
					\beta_4\\
					\gamma_1\\
					\gamma_2\\
					\gamma_3\\
					\gamma_4
				\end{pmatrix}
	\]
	
	By Figure \ref{fig:start}, and Lemma \ref{lem:main}, we obtain
	\[
			T=\begin{pmatrix}
					0&0&-1&-1&0&0&0&0\\
					1&0&-1&0&0&0&0&0\\
					0&0&0&0&0&1&0&0\\
					1&0&-2(1+n)&3+2n&1&1&1&-1\\
					\hline
					0&-1&0&-1&0&0&0&0\\
					0&1&-1&0&0&0&0&0\\
					0&0&0&0&1&0&0&0\\
					0&0&-(1+2n)&2(2+n)&1&1&1&-1\\
					\hline
					0&-1&0&-1&0&0&0&0\\
					1&-1&0&0&0&0&0&0\\
					0&0&0&0&0&0&1&0\\
					0&-(1+2n)&0&3+2n&1&1&1&-1
				\end{pmatrix}.
	\]
	From now, we will perform homological handle slides and Dehn twists until $\alpha_i$, $\beta_i$, and $\gamma_i$ are homologically standard for $i=1, 2, 3, 4$.
	After Dehn twisting $1+2n$ times along $(0, 0, 1, 0, 0, 0, 0, 0)$, we have
	\[
			T=\begin{pmatrix}
					0&0&-1&-1&0&0&0&0\\
					1&0&-1&0&0&0&0&0\\
					0&0&0&0&0&1&0&0\\
					1&0&-1&3+2n&1&1&1&-1\\
					\hline
					0&-1&0&-1&0&0&0&0\\
					0&1&-1&0&0&0&0&0\\
					0&0&0&0&1&0&0&0\\
					0&0&0&2(2+n)&1&1&1&-1\\
					\hline
					0&-1&0&-1&0&0&0&0\\
					1&-1&0&0&0&0&0&0\\
					0&0&1+2n&0&0&0&1&0\\
					0&-(1+2n)&1+2n&3+2n&1&1&1&-1
				\end{pmatrix}.
	\]
	Next, we perform homological Dehn twists $3+2n$ times along $(0, 0, 0, 1, 0, 0, 0, 0)$. Then,  we obtain
	\[
			T=\begin{pmatrix}
					0&0&-1&-1&0&0&0&0\\
					1&0&-1&0&0&0&0&0\\
					0&0&0&0&0&1&0&0\\
					1&0&-1&0&1&1&1&-1\\
					\hline
					0&-1&0&-1&0&0&0&0\\
					0&1&-1&0&0&0&0&0\\
					0&0&0&0&1&0&0&0\\
					0&0&0&1&1&1&1&-1\\
					\hline
					0&-1&0&-1&0&0&0&0\\
					1&-1&0&0&0&0&0&0\\
					0&0&1+2n&0&0&0&1&0\\
					0&-(1+2n)&1+2n&0&1&1&1&-1
				\end{pmatrix}.
	\]
	After homological handle sliding $\alpha_4$ over $\alpha_2$, $\beta_2$ over $\beta_1$ and $\gamma_4$ over $\gamma_2$, we obtain
	\[
			T=\begin{pmatrix}
					0&0&-1&-1&0&0&0&0\\
					1&0&-1&0&0&0&0&0\\
					0&0&0&0&0&1&0&0\\
					-(1+2n)&0&1+2n&0&1&1&1&-1\\
					\hline
					0&-1&0&-1&0&0&0&0\\
					0&0&-1&-1&0&0&0&0\\
					0&0&0&0&1&0&0&0\\
					0&0&0&1&1&1&1&-1\\
					\hline
					0&-1&0&-1&0&0&0&0\\
					1&-1&0&0&0&0&0&0\\
					0&0&1+2n&0&0&0&1&0\\
					-(1+2n)&0&1+2n&0&1&1&1&-1
				\end{pmatrix}.
	\]
	Next, we perform homological handle slides $\alpha_2$ over $\alpha_1$ and $\gamma_2$ over $\gamma_1$. Then, we obtain
	\[
			T=\begin{pmatrix}
					0&0&-1&-1&0&0&0&0\\
					1&0&0&1&0&0&0&0\\
					0&0&0&0&0&1&0&0\\
					-(1+2n)&0&1+2n&0&1&1&1&-1\\
					\hline
					0&-1&0&-1&0&0&0&0\\
					0&0&-1&-1&0&0&0&0\\
					0&0&0&0&1&0&0&0\\
					0&0&0&1&1&1&1&-1\\
					\hline
					0&-1&0&-1&0&0&0&0\\
					1&0&0&1&0&0&0&0\\
					0&0&1+2n&0&0&0&1&0\\
					-(1+2n)&0&1+2n&0&1&1&1&-1
				\end{pmatrix}.
	\]
	After homological handle sliding $\alpha_4$ over $\alpha_2$ and $\gamma_4$ over $\gamma_2$, we obtain
		\[
			T=\begin{pmatrix}
					0&0&-1&-1&0&0&0&0\\
					1&0&0&1&0&0&0&0\\
					0&0&0&0&0&1&0&0\\
					0&0&1+2n&1+2n&1&1&1&-1\\
					\hline
					0&-1&0&-1&0&0&0&0\\
					0&0&-1&-1&0&0&0&0\\
					0&0&0&0&1&0&0&0\\
					0&0&0&1&1&1&1&-1\\
					\hline
					0&-1&0&-1&0&0&0&0\\
					1&0&0&1&0&0&0&0\\
					0&0&1+2n&0&0&0&1&0\\
					0&0&1+2n&1+2n&1&1&1&-1
				\end{pmatrix}.
	\]
	Next, we perform homological Dehn twists $-(2n+1)$ times along $(0, 0, 1, 0, 0, 0, 0, 0)$. Then, we obtain
	\[
			T=\begin{pmatrix}
					0&0&-1&-1&0&0&0&0\\
					1&0&0&1&0&0&0&0\\
					0&0&0&0&0&1&0&0\\
					0&0&0&1+2n&1&1&1&-1\\
					\hline
					0&-1&0&-1&0&0&0&0\\
					0&0&-1&-1&0&0&0&0\\
					0&0&0&0&1&0&0&0\\
					0&0&-(2n+1)&1&1&1&1&-1\\
					\hline
					0&-1&0&-1&0&0&0&0\\
					1&0&0&1&0&0&0&0\\
					0&0&0&0&0&0&1&0\\
					0&0&0&1+2n&1&1&1&-1
				\end{pmatrix}.
	\]
	Finally, we perform homological Dehn twists $-(1+2n)$ times along $(0, 0, 0, 1, 0, 0, 0, 0)$, and after homological handlesliding $\beta_4$ over $\beta_2$, we obtain
	\[
			T=\begin{pmatrix}
					0&0&-1&-1&0&0&0&0\\
					1&0&0&1&0&0&0&0\\
					0&0&0&0&0&1&0&0\\
					0&0&0&0&1&1&1&-1\\
					\hline
					0&-1&0&-1&0&0&0&0\\
					0&0&-1&-1&0&0&0&0\\
					0&0&0&0&1&0&0&0\\
					0&0&0&1&1&1&1&-1\\
					\hline
					0&-1&0&-1&0&0&0&0\\
					1&0&0&1&0&0&0&0\\
					0&0&0&0&0&0&1&0\\
					0&0&0&0&1&1&1&-1
				\end{pmatrix}.
	\]
	After reordering the indices, $\alpha_i$, $\beta_i$, and $\gamma_i$ are homologically standard for $i=1, 2, 3, 4$.
	
	We can prove similarly that in the case of $\overline{\mathcal{D}_b}$.
\end{proof}

\section{Acknowledgements}

The first author would like to express sincere gratitude to his supervisor, Hisaaki Endo, for his unwavering support and encouragement throughout the project. The first author was supported by JST, the establishment of university fellowships towards the creation of science technology innovation, Grant Number JPMJFS2112, and is partially supported by Grant-in-Aid for JSPS Research Fellow from JSPS KAKENHI Grant Number JP23KJ0888.
The second author was partially supported by Grant-in-Aid for JSPS Research Fellow from JSPS KAKENHI Grant Number JP20J20545.


\begin{thebibliography}{99}
	
	\bibitem{BO}
	F. Bonahon and J-P Otal.
	Scidements de Heegaard des espaces lenticulaires.
	Ann. Sci. \'{E}cole Norm. Sup. (4) {\bf16} (1983), 451-466.
	
	\bibitem {CGP} 
	N. Castro, D. Gay, and J. Pinz\'on-Caicedo.
	Trisections of 4-manifolds with boundary.
	Proceedings of the National Academy of Sciences {\bf 115} No. 43 (2018), 10861-10868.

	\bibitem {CGP2} 
	N. Castro, D. Gay, and J. Pinz\'on-Caicedo.
	Diagrams for relative trisections.
	Pacific J. Math. {\bf 294} No. 2 (2018), 275-305.
	
	\bibitem {GK} 
	D. Gay and R. Kirby.
	Trisecting 4-manifolds.
	Geom. Topol. {\bf 20} (2016), no. 6, 3097-3132.
	
	\bibitem {GM1} 
	D. Gay and J. Meier.
	Doubly pointed trisection diagrams and surgery on 2-knots.
	Math. Proc. Camb. Phil. Soc. {\bf 172} (2022), no. 1, 163-195.
	
	\bibitem{I}
	G. Islambouli. 
	Nielsen equivalence and trisections.
	Geometriae Dedicata (2021): 1-15.
	
	\bibitem{KM}
	S. Kim and M. Miller.
	Trisections of surface complements and the Price twist.
	Algebr. Geom. Topol. {\bf20} (2020), no.1, 343-373.
	
	\bibitem{M}
	J. Meier.
 	Trisections and spun four-manifolds.
	Math. Res. Lett {\bf 25} No.5 (2018), 1497-1524.
	
	\bibitem{MZ1}
	J. Meier and A. Zupan.
	Bridge trisections of knotted surfaces in 4-manifolds.
	Proceedings of the National Academy of Sciences {\bf 115} No.43 (2018), 10880-10886.
	
	\bibitem{MSA}
	J. Meier, T. Schirmer and A. Zupan. 
	Classification of trisections and the generalized property R conjecture.
	Proceedings of the American Mathematical Society {\bf 144}, no. 11 (2016), 4983-4997.
	
	\bibitem{O}
	P. Ording.
	Constructing doubly-pointed Heegaard diagrams compatible with (1, 1) knots, 
	J. Knot Theory Ramifications {\bf 22},  no. 11(2013)
	
	\bibitem{W}
	F. Waldhausen.
	Heegaard-Zerlegungen der 3-Sph\"{a}re,
	Topology {\bf{7}} (1968), 195-203.
	
\end{thebibliography}
\end{document}